\newcommand{\C}{\mathbb{C}}
\newcommand{\R}{\mathbb{R}}
\newcommand{\N}{\mathbb{N}}
\newcommand{\Z}{\mathbb{Z}}
\renewcommand{\d}[1][x]{\,\operatorname{d}\!#1}
\newcommand{\ddt}{\frac{\d[]}{\d[t]}}
\newcommand{\ddtddt}{\frac{\d[]^2}{\d[t^2]}}
\newcommand{\intRd}{\int_{\R^d}}
\newcommand{\Rd}{\R^d}
\newcommand{\spn}{\operatorname{span}}
\newcommand{\ran}{\operatorname{ran}}
\newcommand{\tr}{\operatorname{Tr}}
\newcommand{\A}{\mathbf{A}}
\newcommand{\B}{\mathbf{B}}
\newcommand{\CC}{\mathbf{C}}
\newcommand{\DD}{\mathbf{D}}
\newcommand{\II}{\mathbf{I}}
\newcommand{\LL}{\mathbf{L}}
\renewcommand{\P}{\mathbf{P}}
\newcommand{\Q}{\mathbf{Q}}
\newcommand{\RR}{\mathbf{R}}
\newcommand{\ip}[2]{\langle {#1}\,,\, {#2} \rangle}
\newcommand{\ipBig}[2]{\Big\langle {#1}\,,\, {#2} \Big\rangle}
\newcommand{\norm}[1]{\| {#1} \|}
\newcommand{\normP}[1]{\| {#1} \|_{\P}}
\DeclareMathOperator{\diag}{diag}
\newcommand{\T}{\mathbf{T}}
\begin{document}

\title*{On linear hypocoercive BGK models}
\numberwithin{equation}{section} 
\author{Franz Achleitner, Anton Arnold, Eric A.\ Carlen}
\institute{Franz Achleitner \at Vienna University of Technology, Institute of Analysis and Scientific Computing, Wiedner Hauptstr. 8-10, A-1040 Wien, Austria, \email{franz.achleitner@tuwien.ac.at}
\and Anton Arnold \at Vienna University of Technology, Institute of Analysis and Scientific Computing, Wiedner Hauptstr. 8-10, A-1040 Wien, Austria, \email{anton.arnold@tuwien.ac.at}
\and Eric A.\ Carlen \at Department of Mathematics, Rutgers University,
110 Frelinghuysen Rd., Piscataway NJ 08854, USA,
 \email{carlen@math.rutgers.edu}}
%
%
\maketitle

\abstract{We study hypocoercivity for a class of linear and linearized BGK models for discrete and continuous
phase spaces. We develop methods for constructing entropy functionals that prove
exponential rates of relaxation to equilibrium. Our strategies are based on the entropy and spectral methods, adapting Lyapunov's direct method (even for ``infinite matrices'' appearing for continuous phase spaces) to construct appropriate entropy functionals. Finally, we also prove local asymptotic stability of a nonlinear BGK model.}


\section{Introduction}
\label{sec:1}

This paper is concerned with the large time behavior of linear BGK models (named after the physicists Bhatnagar-Gross-Krook \cite{BGK54}) for a phase space density $f(x,v,t)$; $x,\,v\in\Rd$, satisfying the kinetic evolution equation
\begin{equation}\label{bgk}
  f_t+v\cdot\nabla_x f-\nabla_x V \cdot \nabla_v f = \Q f := M_{T(t)}(v)\,\intRd f(x,v,t)\,\d[v] -f(x,v,t)\,,\quad t\ge0\,,
\end{equation}
with some given confinement potential $V(x)$ and where
 $M_{T}$ denotes the normalized Maxwellian at  some temperature $T$:
 \begin{equation*}
 M_T(v) = (2\pi T)^{-d/2}e^{-|v|^2/2T}\ .
\end{equation*}
We assume that the initial condition is normalized as 
$$
  \int_{\Rd\times\Rd} f(x,v,0)\,{\rm d} x {\rm d}v = 1\ ,
$$
and this normalization persists under the flow of \eqref{bgk}.
The function $T(t)$ is defined so that the energy is conserved:
$$\int_{\Rd\times\Rd} \left[\frac{|v|^2}{2} + V(x)\right] f(x,v,t)\,{\rm d} x {\rm d}v = 
\int_{\Rd\times\Rd} \left[\frac{|v|^2}{2} + V(x)\right] f(x,v,0)\,{\rm d} x {\rm d}v =: E_0\ .$$
This is achieved in case
\begin{equation}\label{Ttime}
T(t) := \frac2d\left[ E_0 - \int_{\Rd} V(x)\rho(x,t)\,{\rm d }x\right]\ ,
\end{equation}
where $\rho(x,t) := \int_{\Rd}f(x,v,t)\,{\rm d}v$, which completes the specification of the equation.

This model differs form the usual BGK model in that the
Maxwellian $M_T$ has a spatially constant temperature and zero momentum. This is already a simplification of
the standard BGK model in which $M_T$ would be replaced by the local Maxwellian corresponding to $f$; i.e., the
local Maxwellian with the same hydrodynamic moments as $f$. However, \eqref{bgk}-\eqref{Ttime} is still non-linear since $T(t)$ depends linearly
on $f$, but then $M_T$ depends nonlinearly on $T$. This simplified 
equation arises in certain models of thermostated systems \cite{BL}.  
Under sufficient growth assumptions on $V$ as $|x|\to\infty$, the unique normalized steady state of \eqref{bgk} is
$$
  f^\infty(x,v) = \exp\left(-\frac{1}{T _\infty}\big[V(x)+\frac{|v|^2}{2}\big]\right)\,,
$$
where the normalization constant shall be included in $V$ and $T_\infty$ such that the energy associated to $f^\infty$ is $E_0$.

In fact, we simplify the model further: 
We take $d=1$, replace the spatial domain $\Rd$ by the unit circle $\mathbf{T}^1$, and then dispense with the confining potential.
Thus we shall first investigate the linear BGK model
\begin{equation}\label{bgk2}
  f_t+v\ f_x  = \Q f := M_{T}(v)\,\int_\R f(x,v,t)\,\d[v] -f(x,v,t)\,,\quad t\ge0\,.
\end{equation}
Let $\d[\tilde x]$ denote the normalized Lebesgue measure on $\mathbf{T}^1$, and consider normalized initial data $f(x,v,0)$
 such that $\int_{\mathbf{T}^1\times\R} f(x,v,0)\,{\rm d} \tilde x {\rm d}v=1$ (a normalization which is conserved under the flow).
In this case, equation (\ref{Ttime}) for the temperature reduces to 
$T(t) = 2 E_0$, independent of $t$, with
$$
  E_0:=\int_{\mathbf{T}^1\times\R} \frac{v^2}{2}f(x,v,0)\,{\rm d} \tilde x {\rm d}v \ .
$$

For the simplified linear equation~\eqref{bgk2}, the unique steady state is $f^\infty = M_T$, uniform on the circle. 
We shall study the rate at which normalized solutions of (\ref{bgk2}) approach the steady state $f^\infty = M_T$ as $t\to\infty$. 
This problem is interesting since the collision mechanism drives the local velocity distribution towards $M_T$, but  a more complicated mechanism
involving the interaction of the streaming term $v \partial_x$ and the collision operator $\Q$  is responsible for the emergence of spatial uniformity. 

To elucidate this key point, let us define the operator ${\bf L}$ by
$$ {\bf L} f(x,v) := -v\ \partial_x f (x,v) + \Q f(x,v)\ .$$
Then the evolution equation (\ref{bgk2}) can be written $f_t = {\bf L}f$.  Let $\mathcal{H}$ denote the weighted space $L^2(\mathbf{T}^1\times\R;M_T^{-1}(v)\d[v])$.
Then ${\bf Q}$ is self-adjoint on $\mathcal{H}$, ${\bf L}f^\infty = 0$, and a simple computation shows that if $f(t)$ is a solution of (\ref{bgk2}),
$$
\frac{{\rm d}}{{\rm d}t} \| f(t) - f^\infty\|_\mathcal{H}^2 = 2 \langle f(t), {\bf L} f(t)\rangle_\mathcal{H} = 2 \langle f(t), \Q f(t)\rangle_\mathcal{H}
= -2\| f  - M_T \rho\|_{\mathcal{H}}^2\ ,
$$
where, as before, $\rho(x,t) := \int_{\R}f(x,v,t)\,{\rm d}v$. Thus, while the norm $ \| f(t)- f^\infty\|_\mathcal{H}$ is monotone decreasing, the derivative is zero
whenever $f(t)$ has the form $f(t) = M_T \rho$ for {\em any} smooth density $\rho$. In particular, the inequality
\begin{equation}\label{coercive}
\langle f - f^\infty , {\bf L} (f - f^\infty) \rangle_\mathcal{H} \leq - \lambda \|f - f^\infty\|_\mathcal{H}^2
\end{equation}
is valid in general for $\lambda = 0$, but for no positive value of $\lambda$.  If (\ref{coercive}) were valid for some $\lambda>0$, we would have had
$\|f(t) - f^\infty\|_\mathcal{H}^2 \leq e^{-t\lambda}\|f(0) - f^\infty\|_\mathcal{H}^2$ for all solutions of our equation, and we would say that
the evolution equation is {\em coercive}. However, while this is not the case, it does turn out that one still has constants $1< C < \infty$ and $\lambda> 0$
such that
\begin{equation}\label{coercive2}
\|f(t) - f^\infty\|_\mathcal{H}^2 \leq Ce^{-t\lambda}\|f(0) - f^\infty\|_\mathcal{H}^2\ .
\end{equation}
(The fact that there exist initial data $f(0) \neq f^\infty $ for which the derivative of the norm is zero shows that necessarily $C>1$.) In Villani's terminology
(see \S3.2 of \cite{ViH06}),
this means that our evolution equation is {\em hypocoercive}.

%

Many hypocoercive equations have been studied in recent years \cite{ViH06, He06, DoMoScH09, DoMoScH10, ArEr14},
 including BGK models in \S 1.4 and \S 3.1 of \cite{DoMoScH10} (see also \S\ref{sec:41} below),
 but sharp decay rates were rarely an issue there.
The fact that normalized solutions of (\ref{bgk2}) converge exponentially fast at {\em some}
rate to $f^\infty$ 
is a consequence of a probabilistic analysis of such equations in \cite{BL}: In fact, equation (\ref{bgk2}) is the Kolmogorov forward equation for a certain Markov process, and as shown in  \cite{BL} an argument based on a Doeblin condition yields exponential convergence. However, this approach relies on compactness arguments and does not yield explicit values for $C$ or $\lambda$.  We shall discuss another approach to the problem of establishing hypocoercivity for such models
that does yield explicit -- and quite reasonable -- values for $C$ and $\lambda$. 
To this end, our main tool will be variants of the \emph{entropy--entropy production method.}
Our first main result will be a decay estimate for \eqref{bgk2}:
\begin{theorem}\label{bgk-decay}
{\bf [decay estimate for \eqref{bgk2}]} Fix unit temperature $T= 1$. There exists an entropy functional $e(f)$ satisfying
$$  \frac12 e(f) \leq \| f - M_1\|^2_\mathcal{H} \le 4e(f) $$ 
such that for all (normalized) solutions $f(t)$ of (\ref{bgk2}) with $e(f^I) < \infty$,
$$e(f(t)) \leq e^{-t\cdot 0.547592...}e(f^I)\ ,\qquad t\ge0\ .$$
\end{theorem}
\bigskip

Finally, we shall study the linearization of a one dimensional BGK equation around a Maxwellian with some constant-in-$x$ temperature. 
In one dimension, if collisions conserve both energy and momentum, they are trivial: The only kinematic possibilities are an exchange of velocities which has no effect at all at the kinetic level. Therefore, in one dimension the natural BGK equation, which would correspond for example to the Kac equation \cite{K56}, uses Maxwellians determined by the density and temperature alone. The method will be applied to the three dimensional equation in a follow-up paper.

For a probability density $f(x,v)$ on $\T^1\times\R$ we thus consider  the nonlinear BGK equation
\begin{equation} \label{bgk:torus} 
 f_t(x,v,t) +  v\ f_x(x,v,t) = M_f(x,v,t) -  f(x,v,t)\ , \qquad t\geq 0\ , 
\end{equation}
where $M_f$ is the local Maxwellian having the same local density and ``temperature'' as $f$: The density is defined as $\rho(x,t) := \int_\R f(x,v,t)\d[v]$ 
and the pressure as $P(x,t) := \int_\R v^2f(x,v,t)\d[v]$. In analogy to the situation with zero velocity we shall refer to the conditional second moment, $\tilde T(x,t) := P(x,t)/\rho(x,t)$ as temperature (with the gas constant scaled as $R=1$). 
Then, for fixed $t$, the local Maxwellian $M_f$ is defined as
\begin{equation}\label{mloc}
M_f(x,v) = \frac{\rho(x)}{\sqrt{2\pi \tilde T(x)}} e^{-v^2/2\tilde T(x)} = \frac{\rho^{3/2}(x)}{\sqrt{2\pi P(x)}}e^{-v^2\rho(x)/2P(x)}\ ,
\end{equation}
and we shall mostly use the second version of it in the sequel.
The existence of global solutions for the Cauchy problem of similar nonlinear BGK models has been proven in \cite{Pe89, PePu93, BoCa09}.

We assume $\int_{\T^1} \rho(x)\d[\tilde x] = 1$ and define $T:=\int_{\T^1} P(x)\d[\tilde x]$, which are both conserved by the flow of \eqref{bgk:torus}.
Now we consider $f$ close to the global equilibrium $M_T(v)$, with  $h$ defined by $f = M_T+h$. Then 
\begin{align}
\begin{split}\label{f:perturb}
  \rho(x,t) &= 1 + \sigma(x,t) \qquad{\rm with}\qquad \sigma(x,t):=\int_\R h(x,v,t) \d[v]\ ,\\
  P(x,t) &= T + \tau(x,t) \qquad{\rm with}\qquad \tau(x,t):=\int_\R v^2h(x,v,t) \d[v]\ , 
\end{split}
\end{align}
which implies
\begin{equation}\label{mloc2}
\int_{\T^1} \sigma(x,t)\d[\tilde x] =0  \qquad{\rm and}\qquad   \int_{\T^1} \tau(x,t)\d[\tilde x] =0\ .
\end{equation}
The perturbation $h$ then satisfies
$$
 h_t (x,v,t) +  v\ h_x(x,v,t) = [M_f(x,v,t)  -M_T(v)]  - h(x,v,t)\ , \quad t\geq 0 \ .
$$
For $\sigma$ and $\tau$ small we have
\begin{align}\label{taylor}
 M_f(x,v) - M_T(v) &= \frac{(1+\sigma)^{3/2}(x)}{\sqrt{2\pi (T+\tau(x))}}e^{-v^2(1+\sigma(x))/2(T+\tau(x))} - \frac{1}{\sqrt{2\pi T}}e^{-v^2/2T}\\
  &\approx \left( \frac32 - \frac{v^2}{2T}\right)M_T(v)\sigma(x)
      + \left( -\frac{1}{2T}+ \frac{v^2}{2T^2}\right)M_T(v)\tau(x) \ ,
\end{align}
which yields the linearized BGK model that we shall analyze in this paper:
\begin{align} \label{linBGK:torus}
 & h_t (x,v,t) +  v\ h_x(x,v,t) \\
 &\quad = M_T(v) \,\left[\left( \frac32 - \frac{v^2}{2T}\right)\sigma(x,t)
      + \left( -\frac{1}{2T}+ \frac{v^2}{2T^2}\right)\tau(x,t)\right]  - h(x,v,t)\ , \quad t\geq 0 \ .\nonumber
\end{align}
Following the same approach as for Theorem \ref{bgk-decay} we shall obtain a decay estimate for \eqref{linBGK:torus}, and then local asymptotic stability for the nonlinear BGK equation (\ref{bgk:torus}).  For the latter purpose, we need to introduce another set of norms.

For $\gamma \geq 0$, let $H^\gamma(\mathbf{T}^1)$ be the Sobolev space consisting of  the completion  of smooth functions $\varphi$ on $\mathbf{T}^1$ in the Hilbertian norm 
$$\|\varphi\|_{H^\gamma}^2 := \sum_{k\in \Z} (1+k^2)^{\gamma}|\varphi_k|^2\ ,$$
where $\varphi_k$ is the $k$th Fourier coefficient of $\varphi$. Let $\mathcal{H}_\gamma$ denote the Hilbert space
$H^\gamma(\mathbf{T}^1)\otimes L^2(\R;M_T^{-1})$.  
Then the inner product in $\mathcal{H}_\gamma$ is given by
$$\langle f,g\rangle_{\mathcal{H}_\gamma} = \int_{\mathbf{T}^1} \int_\R \overline{f}(x,v) \left[\left(1 - \partial_x^2\right)^{\gamma} g(x,v) \right]M_T^{-1}(v){\rm d}v {\rm d}\tilde x \ . $$

\begin{theorem}\label{linBGK-decay}
{\bf [decay estimates for \eqref{linBGK:torus}, \eqref{bgk:torus}]} 
Fix unit temperature $T=1$.
\begin{enumerate}
\item[(a)]
For all $\gamma \ge 0$ there is an entropy functional $e_\gamma(f)$ satisfying
\begin{equation}\label{comp}
\frac23 e_\gamma(f)  \leq \|f-M_1\|_{\mathcal{H}_\gamma}^2 \leq \frac43 e_\gamma(f)
\end{equation}
such that if $h=f-M_1$ is a solution of the linearized BGK equation \eqref{linBGK:torus} with initial data $h^I=f^I-M_1$ such that $\int_{\T^1}\int_{\R} (1,\,v^2)\,f^I\,\d[v]\d[\tilde x]=(1,\,1)$, 
and ${e_\gamma(f^I)< \infty}$, then
\begin{equation}\label{locasstab5}
e_\gamma(f(t)) \leq e^{-t/25}e_\gamma(f^I)\ ,\qquad t\ge0\ .
\end{equation}

\item[(b)]
Moreover, for all $\gamma>1/2$, there is an explicitly computable  $\delta_\gamma>0$ such that 
if $f$ is a solution of the nonlinear BGK equation \eqref{bgk:torus} with initial data $f^I$ such that $\int_{\T^1}\int_{\R} (1,\,v^2)\,f^I\,\d[v]\d[\tilde x]=(1,\,1)$, and 
$\|f^I-M_1\|_{\mathcal{H}_\gamma} < \delta_\gamma$, then for the same entropy function 
$e_\gamma$, \eqref{locasstab5} is again valid. 
\end{enumerate}
\end{theorem}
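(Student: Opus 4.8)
\medskip
\noindent\emph{Proof plan.}
The natural first move is to diagonalize in the spatial variable: expand $h(x,v,t)=\sum_{k\in\Z}h_k(v,t)\ee^{ikx}$ and then expand each fibre in the Hermite basis $\{\phi_n M_1\}_{n\ge0}$ of $L^2(\R;M_1^{-1})$ (with $\phi_0=1$, $\phi_1=v$, $\phi_2=(v^2-1)/\sqrt2$, and $v\phi_n=\sqrt{n+1}\,\phi_{n+1}+\sqrt n\,\phi_{n-1}$), writing $h_k=\sum_{n\ge0}c_{n,k}(t)\phi_n M_1$, so that $\|f-M_1\|_{\mathcal H_\gamma}^2=\sum_k(1+k^2)^\gamma\sum_n|c_{n,k}|^2$. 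In these coordinates the streaming term $v\partial_x$ becomes multiplication of $\mathbf c_k$ by $-ik$ times the self-adjoint Jacobi matrix $\mathbf J$ of multiplication by $v$, and a one-line computation using $v^2=1+\sqrt2\,\phi_2$ shows that the linearized collision term in \eqref{linBGK:torus} is exactly $-(\mathbf I-\mathbf\Pi)$, where $\mathbf\Pi$ is the \emph{orthogonal} projection onto $\mathrm{span}\{\phi_0 M_1,\phi_2 M_1\}$ (the "macroscopic" coefficients $c_0,c_2$). Hence mode $k$ obeys $\dot{\mathbf c}_k=\mathbf A_k\mathbf c_k$ with $\mathbf A_k:=-ik\,\mathbf J-(\mathbf I-\mathbf\Pi)$. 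The conservation constraints \eqref{mloc2} force $c_{0,0}=c_{2,0}=0$, so on the admissible space $\mathbf A_0=-\mathbf I$; for $k\neq0$ the skew part $-ik\mathbf J$ together with the degenerate dissipation $-(\mathbf I-\mathbf\Pi)$ forms a hypocoercive pair (no nontrivial subspace of $\mathrm{ran}\,\mathbf\Pi$ is $\mathbf J$-invariant), so one expects a uniform spectral gap.

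For part (a) I would construct, for each $k$, a bounded and boundedly invertible positive operator $\mathbf P_k$ on the Hermite coefficients with $\tfrac34\mathbf I\le\mathbf P_k\le\tfrac32\mathbf I$ and satisfying the Lyapunov operator inequality $\mathbf P_k\mathbf A_k+\mathbf A_k^*\mathbf P_k\le-\tfrac1{25}\mathbf P_k$, and then set $e_\gamma(f):=\sum_{k\in\Z}(1+k^2)^\gamma\langle\mathbf c_k,\mathbf P_k\mathbf c_k\rangle$. The bounds on $\mathbf P_k$ give \eqref{comp} verbatim, and differentiating $e_\gamma$ along \eqref{linBGK:torus} mode by mode yields $\tfrac{\d[]}{\d[t]}e_\gamma(f(t))\le-\tfrac1{25}e_\gamma(f(t))$, hence \eqref{locasstab5}; since every mode carries its own fixed weight $(1+k^2)^\gamma$ the rate is automatically independent of $\gamma$. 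I would take $\mathbf P_k=\mathbf I$ plus a small finite-rank correction coupling $c_0\leftrightarrow c_1$ and $c_2\leftrightarrow c_1,c_3$ through the streaming term — the standard hypocoercivity device — with a coefficient depending on $k$ but chosen so that $\mathbf P_k$ never leaves $[\tfrac34,\tfrac32]\mathbf I$; optimizing the coefficient over $k$ produces an explicit admissible rate, of which $1/25$ is a clean (non-sharp) lower bound. The main obstacle is exactly this uniform-in-$k$ construction: one must verify the Lyapunov inequality for a genuinely infinite-dimensional, strongly non-normal operator whose free part $\mathbf J$ has purely essential spectrum equal to $\R$, \emph{and} keep $\mathbf P_k$ inside the narrow window imposed by \eqref{comp}; in particular the large-$|k|$ singular-perturbation regime must be handled so that the gap does not collapse.

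For part (b), write $f=M_1+h$ and split the nonlinear right-hand side of \eqref{bgk:torus} as $M_f-M_1-h=\mathbf L_{\mathrm{lin}}h+N(h)$, where $\mathbf L_{\mathrm{lin}}$ is the generator of \eqref{linBGK:torus} and $N(h)=M_f-M_1-M_1\big[(\tfrac32-\tfrac{v^2}{2})\sigma+(-\tfrac12+\tfrac{v^2}{2})\tau\big]$ is the Taylor remainder from \eqref{taylor}, smooth and quadratically small in $(\sigma(x),\tau(x))$ near the origin. For $\gamma>1/2$ one has the embedding $H^\gamma(\T^1)\hookrightarrow C^0(\T^1)$ and the Banach-algebra property of $H^\gamma(\T^1)$; combined with the pointwise bounds $|\sigma(x)|\le\|h(x,\cdot)\|_{L^2(M_1^{-1})}$, $|\tau(x)|\le\sqrt3\,\|h(x,\cdot)\|_{L^2(M_1^{-1})}$ and $\|\sigma\|_{H^\gamma},\|\tau\|_{H^\gamma}\le\|h\|_{\mathcal H_\gamma}$ (together with $\sup_x\|h(x,\cdot)\|_{L^2(M_1^{-1})}\le C_\gamma\|h\|_{\mathcal H_\gamma}$, which needs $\gamma>1/2$), this yields $\|N(h)\|_{\mathcal H_\gamma}\le C_\gamma\|h\|_{\mathcal H_\gamma}^2$ as long as $\|h\|_{\mathcal H_\gamma}$ lies below an absolute threshold — which also keeps $\rho=1+\sigma$ and $P=1+\tau$ positive, so $M_f$ is well defined and the local solution from \cite{Pe89,PePu93,BoCa09} persists in $\mathcal H_\gamma$. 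Differentiating the \emph{same} functional $e_\gamma$ along \eqref{bgk:torus}, the linear part contributes $\le-\tfrac1{25}e_\gamma(h)$ by the Lyapunov inequality of part (a) and the $N(h)$ part contributes at most $C_\gamma'\|h\|_{\mathcal H_\gamma}\,e_\gamma(h)\le C_\gamma''\,e_\gamma(h)^{3/2}$, so $\tfrac{\d[]}{\d[t]}e_\gamma(h)\le\big(-\tfrac1{25}+C_\gamma''\sqrt{e_\gamma(h)}\,\big)e_\gamma(h)$. Choosing $\delta_\gamma$ so small that $\|h^I\|_{\mathcal H_\gamma}<\delta_\gamma$ forces $e_\gamma(h^I)<(1/(25C_\gamma''))^2$, a routine continuity/bootstrap argument keeps $e_\gamma(h(t))$ below this threshold for all $t\ge0$, whence $\tfrac{\d[]}{\d[t]}e_\gamma(h)\le-\tfrac1{25}e_\gamma(h)$ and \eqref{locasstab5} follows; $\delta_\gamma$ is explicitly computable from $C_\gamma''$ and the constants in \eqref{comp}. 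The only genuinely new work beyond part (a) is the quadratic estimate on $N(h)$ in the $\mathcal H_\gamma$-norm, and it is precisely there that the hypothesis $\gamma>1/2$ is used.
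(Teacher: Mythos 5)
Your overall route coincides with the paper's: Fourier modes in $x$, Hermite expansion in $v$, the observation that the linearized collision term is $-(\II-\Pi)$ with $\Pi$ the orthogonal projection onto the coefficients $c_0,c_2$ (i.e.\ the diagonal $\diag(0,-1,0,-1,-1,\dots)$), a Lyapunov functional $e_\gamma(f)=\sum_{k}(1+k^2)^\gamma\langle h_k,\P_k h_k\rangle$ with $\P_k$ a finite-rank perturbation of the identity, and, for (b), a quadratic Taylor-remainder bound in $\mathcal{H}_\gamma$ (using $\gamma>1/2$) plus absorption and a bootstrap. But two genuine gaps remain. First, the decisive quantitative step of part (a) is never carried out: you do not exhibit $\P_k$ and verify the uniform-in-$k$ inequality $\CC_k^*\P_k+\P_k\CC_k\geq 2\mu\,\P_k$ with an explicit $2\mu\geq 1/25$ while keeping $\P_k$ inside the window required by \eqref{comp}; you explicitly flag this as ``the main obstacle'', invoking the infinite dimensionality and the essential spectrum of the Jacobi operator. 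In the paper this obstacle evaporates precisely because the correction to $\II$ sits in the upper-left $4\times 4$ block (coupling $c_0\leftrightarrow c_1$ and $c_2\leftrightarrow c_3$ with entries $\mp i\alpha/k$, $\mp i\beta/2k$), so that $\CC_k^*\P_k+\P_k\CC_k$ equals $2\II$ outside one fixed $5\times 5$ block $\DD_{k,\alpha,\beta}$; choosing $\alpha=\beta=1/3$, a Sylvester/determinant analysis plus a trace and arithmetic--geometric mean argument gives $\DD_{k,1/3,1/3}\geq 0.0549\,\II$ uniformly in $k\neq 0$, and $\tfrac23\II\leq\P_k\leq\tfrac43\II$ then yields the linear rate $0.0412>1/25$. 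Without such an explicit finite-dimensional reduction and computation, your part (a) is an outline, not a proof: both the constant $1/25$ and the norm-equivalence constants hinge on it.

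Second, the last step of your part (b) is not valid as written: from $\tfrac{{\rm d}}{{\rm d}t}e_\gamma\leq\bigl(-\tfrac1{25}+C_\gamma''\sqrt{e_\gamma}\bigr)e_\gamma$ together with $\sqrt{e_\gamma}<1/(25C_\gamma'')$ you conclude $\tfrac{{\rm d}}{{\rm d}t}e_\gamma\leq-\tfrac1{25}e_\gamma$, but the hypothesis only gives $\tfrac{{\rm d}}{{\rm d}t}e_\gamma\leq-\bigl(\tfrac1{25}-C_\gamma''\sqrt{e_\gamma}\bigr)e_\gamma$, i.e.\ a rate strictly below $1/25$ for every nonzero initial datum. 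Since your plan fixes the linear Lyapunov inequality in (a) at exactly $1/25$, there is no slack left to absorb the cubic term. The remedy is exactly what the paper does: establish the linear rate with a margin (there, $0.0412$) strictly better than $1/25$, and let the surplus swallow the $\mathcal{O}(e_\gamma^{3/2})$ contribution once $\delta_\gamma$ is small; otherwise you must weaken the claimed nonlinear decay rate, and \eqref{locasstab5} as stated in part (b) is not obtained.
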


\bigskip
Before turning to our main investigation, i.e.~exponential decay in the BGK equations \eqref{bgk2}, \eqref{linBGK:torus}, \eqref{bgk:torus}, we shall study some still simpler models with a finite number of positions and velocities: In \S\ref{sec:2} we analyze coercive BGK models with first two and then finitely many velocities using relative entropies. Since this approach fails for discrete hypocoercive BGK models (considered in \S\ref{sec:3}), their analysis will be based on spectral methods and Lyapunov's direct method. \S\ref{sec:4} is concerned with space-inhomogeneous BGK models. We shall start with its discrete velocity analogs in \S\ref{sec:41}--\S\ref{sec:42}, where the velocity modes will be expanded in Krawtchouk polynomials -- a discrete analog of the Hermite polynomials. In section \ref{sec:43} we shall finally analyze the exponential convergence of the linear BGK equation \eqref{bgk2}, using a Hermite expansion of the velocity modes and an adaption of Lyapunov's!
  direct method, used here for 
``infinite matrices''. This will yield the proof of Theorem \ref{bgk-decay}. This strategy is modified in \S\ref{sec:44} for the linearized BGK equation \eqref{linBGK:torus}, proving Theorem \ref{linBGK-decay}(a). Finally, in \S\ref{sec:45} we analyze the local asymptotic stability of the nonlinear BGK equation \eqref{bgk:torus}, as stated in Theorem \ref{linBGK-decay}(b).


\section{Discrete coercive BGK models}
\label{sec:2}


In this section we consider space-homogeneous BGK models with a finite number of velocities. 
Our main tool in the investigation is the relative entropy, which is defined as follows
(see \S2.2 of \cite{ArMaToUn01} for more details):
\begin{definition}\label{def-rel-entr}
\begin{enumerate}
  \item [(a)] Let $J$ be either $\R^+$ or $\R$. A scalar function $\psi\in C(\bar J)\cap C^2(J)$ satisfying the conditions
  \begin{equation}\label{entropy-generator}
    \psi(1)=0\,,\quad\psi\ge0\,,\quad \psi''\ge0\,,
    \;\mbox{ on } J 
  \end{equation}
  (and hence also $\psi'(1)=0$) is called \emph{entropy generator}.
  \item [(b)]
  Let $f_1\in L^1(\R^{2d})$, $f_2\in L^1_+(\R^{2d})$ with $\int\!\int f_1\d\d[v]=\int\!\int f_2\d\d[v]=1$ and $\frac{f_1}{f_2}(x,v)\in \bar J$ a.e.\ (w.r.t.\ the measure $f_2(\d\d[v])$). Then
  \begin{equation}\label{rel-entropy}
    e_\psi(f_1|f_2):=\int\!\!\int_{\R^{2d}} \psi\Big(\frac{f_1}{f_2}\Big) f_2 \d\d[v]\ge0
  \end{equation}
  is called 
  a \emph{relative entropy}  of $f_1$ with respect to $f_2$ with generating function $\psi$.
\end{enumerate}
\end{definition}

In applications, the most important examples are the logarithmic entropy $e_1(f_1|f_2)$, generated by
$$
  \psi_1(\sigma):=\sigma\ln\sigma-\sigma+1\,,
$$
and the power law entropies $e_p(f_1|f_2)$,  generated by
\begin{equation}\label{p-entropy}
  \psi_p(\sigma):=\sigma^p-1-p(\sigma-1)\,,\quad p>1\,.
\end{equation}
Except for the quadratic entropy $e_2$ we shall always use $J=\R^+$.
Below we shall use also a second family of power law entropies $\hat e_p(f_1|f_2)$ generated by
\begin{equation}\label{p-entropy2}
 \hat \psi_p(\sigma):=|\sigma-1|^p\,,\quad p>1\,.
\end{equation}

The above definition clearly shows that $e_\psi(f_1|f_2)=0$ iff $f_1=f_2$. In the next section we shall hence try to prove that solutions $f(t)$ to BGK models satisfy $e_\psi(f(t)|f^\infty)\to0$ as $t\to\infty$. For the entropies $e_p,\,p\ge1$
such a convergence in relative entropy then also implies $L^1$--convergence, due to the \emph{Csisz\'ar-Kullback inequality}: 
$$
  \|f_1-f_2\|_{L^1(\R^{2d})}^2 \le 2\,e_1(f_1|f_2) \le \frac{2}{p(p-1)}\,e_p(f_1|f_2)\,,
$$
where we used $\psi_1(\sigma) \le\psi_p(\sigma)/\psi''_p(1),\,\sigma\ge0$ in the second inequality.
For the entropies defined in (\ref{p-entropy2}) 
one has a substitute for the Csisz\'ar-Kullback inequality, namely the identity
$$\hat e_p(f_1|f_2) = \| f_1 - f_2\|^p_{L^p(f_2^{1-p})}\ .$$

To illustrate the standard entropy method on a very simple example, we first revisit the ODE (1.10) from \cite{ArMaToUn01} for the vector $f(t)=(f_1(t),\,f_2(t))^\top\in \R^2$:
\begin{align}\label{ODE1}
  \ddt{f}&= \lambda \A f,\quad t\ge 0\,,\\
  f(0) &= f^I \in\R^2\nonumber\,,
\end{align}
with the parameter $\lambda>0$, and the matrix $\A$ has BGK form:
\begin{equation}\label{BGK1}
  \A:=\left(\begin{array}{cc}
 -1 & 1 \\
  1 & -1
\end{array}\right)
=2\left[ 
\left(\begin{array}{c}
 \frac12   \\
  \frac12  
\end{array}\right)
\otimes \,(1,\;1) -
\left(\begin{array}{cc}
  1 & 0 \\
  0 & 1
\end{array}
\right)
\right]\,.
\end{equation}
This ODE can be seen as an $x$--homogeneous variant of \eqref{bgk2} with just two discrete velocities. In fact, on the right hand side of \eqref{BGK1}, the column vector $(\frac12,\;\frac12)^\top$ corresponds to the Maxwellian $M(v)$ in the BGK equation \eqref{bgk2}, and the row vector $(1,\;1)$ corresponds to the velocity integral. The symmetric matrix $\A$ has an eigenvalue 0 with corresponding eigenvector $f^\infty:=(\frac12,\;\frac12)^\top$ and an eigenvalue -2. Hence $\A$ is coercive on $\{f^\infty\}^\perp$. Since each column of $\A$ sums up to 0, the ``total mass'' of the system, i.e.\ $f_1(t)+f_2(t)$, stays constant in time. Hence, we shall assume w.l.o.g.\ that $f^I$ is normalized, i.e. $f^I_1+f^I_2=1$. Thus, as $t\to\infty$, $f(t)=f^\infty+(f^I-f^\infty)\ e^{-2\lambda t}$ converges to $f^\infty$ exponentially with rate $2\lambda$.
For $f^I_{1,2}\ge0$ we have $f_{1,2}(t)\ge0$.

In analogy to Definition \ref{def-rel-entr} we introduce for \eqref{ODE1} (with $n=2$) the relative entropy generated by $\psi$:
\begin{equation}\label{relative-entropy}
  e_\psi(f(t)|f^\infty):= \sum_{j=1}^n \psi\Big(\frac{f_j(t)}{f_j^\infty}\Big)f_j^\infty\,. 
\end{equation}
Its time derivative under the flow of \eqref{ODE1} reads
\begin{align}\label{Fisher-info}
  \ddt e_\psi(f(t)|f^\infty)&= -\lambda(f_1-f_2)\,\left[\psi'\Big(\frac{f_1(t)}{f_1^\infty}\Big)-\psi'\Big(\frac{f_2(t)}{f_2^\infty}\Big)\right]\\
  &   =:-I_\psi(f(t)|f^\infty)=-2\lambda\psi''(\zeta)(f_1-f_2)^2 \le0\,,\nonumber
\end{align}
where $\zeta=\zeta(t)$ is an intermediate value between $2f_1(t)$ and $2f_2(t)$.
$I_\psi(f(t)|f^\infty)$ denotes the \emph{Fisher information} (of $f(t)$ w.r.t.\ $f^\infty$).

As pointed out in \cite{ArMaToUn01}, it is not obvious to bound this Fisher information from below directly by a multiple of the relative entropy (except for quadratic entropies). 
The goal of such an estimate would be to establish the exponential decay of the relative entropy. Hence, it is the essence of the entropy method to consider the entropy dissipation rate: Differentiating \eqref{Fisher-info} once more in time gives
\begin{align}\label{diss-rate}
  &R_\psi(f(t)|f^\infty) :=-\ddt I_\psi(f(t)|f^\infty)\\
  & \quad= 2\lambda I_\psi(f(t)|f^\infty) +\lambda^2 \big(f_1(t)-f_2(t)\big)^2\,\left[\psi''\Big(\frac{f_1(t)}{f_1^\infty}\Big)\frac1{f_1^\infty}
  +\psi''\Big(\frac{f_2(t)}{f_2^\infty}\Big)\frac1{f_2^\infty}\right]\,. \nonumber
\end{align}
Due to $\psi''\ge0$ the second term is nonnegative. Hence, 
$$
  -\ddt I_\psi(f(t)|f^\infty)\ge 2\lambda I_\psi(f(t)|f^\infty)\,.
$$
And this yielded in \cite{ArMaToUn01} the exponential decay of $I_\psi(f(t)|f^\infty)$ and of $e_\psi(f(t)|f^\infty)$ at the \emph{sub-optimal} rate $2\lambda$. But this procedure can be improved easily to give the following sharp result:
\begin{theorem}\label{th1}
Let the convex entropy generator $\psi$ satisfy either: $\psi''$ is convex on $J$; or $\psi'$ is concave on $(0,1)$ along with $\psi'$ is convex on $(1,\infty)$.
Then the solution to \eqref{ODE1} satisfies
\begin{align}\label{I-decay}
  I_\psi(f(t)|f^\infty) \le e^{-4\lambda t}\,I_\psi(f^I|f^\infty)\,,\quad t\ge0\,,\\
  e_\psi(f(t)|f^\infty) \le e^{-4\lambda t}\,e_\psi(f^I|f^\infty)\,,\quad t\ge0\,.\label{e-decay}
\end{align}
\end{theorem}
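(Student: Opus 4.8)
The plan is to establish the sharp rate $4\lambda$ by improving the crude inequality $R_\psi \ge 2\lambda I_\psi$ into the stronger bound $R_\psi \ge 4\lambda I_\psi$, after which Gronwall's lemma immediately yields \eqref{I-decay}, and then a second application of Gronwall (using $\ddt e_\psi = -I_\psi$) yields \eqref{e-decay}. Concretely, from \eqref{diss-rate} it suffices to show that the nonnegative ``second term'',
$$\lambda^2 (f_1-f_2)^2\left[\psi''\Big(\tfrac{f_1}{f_1^\infty}\Big)\tfrac1{f_1^\infty}+\psi''\Big(\tfrac{f_2}{f_2^\infty}\Big)\tfrac1{f_2^\infty}\right],$$
dominates $2\lambda I_\psi(f|f^\infty)$. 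Since $f_1^\infty=f_2^\infty=\tfrac12$, writing $a:=2f_1$, $b:=2f_2$ and recalling from \eqref{Fisher-info} that $I_\psi = 2\lambda(f_1-f_2)[\psi'(a)-\psi'(b)] = \lambda(a-b)[\psi'(a)-\psi'(b)]$ while $(f_1-f_2)^2 = \tfrac14(a-b)^2$, the desired inequality reduces, after cancelling $\lambda^2(a-b)$ and a factor, to the purely one-dimensional claim
$$\tfrac12 (a-b)\big[\psi''(a)+\psi''(b)\big] \ \ge\ 2\big[\psi'(a)-\psi'(b)\big] \qquad\text{for all } a\ge b\ge 0 \text{ in } J.$$

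Thus the real content is this elementary convexity inequality relating the average of $\psi''$ at the endpoints to the divided difference of $\psi'$; it is here that the two structural hypotheses on $\psi$ enter. If $\psi''$ is convex, then by the trapezoidal-rule inequality (overestimation of the integral of a convex function) $\tfrac12(a-b)[\psi''(a)+\psi''(b)] \ge \int_b^a \psi''(s)\,ds = \psi'(a)-\psi'(b)$ — but this only gives the factor $1$, not $2$. So the argument must be more careful: one splits at the point $1$ (where $\psi'(1)=0$) and treats $[b,1]$ and $[1,a]$ separately, using that on $(0,1)$ one has $\psi'\le 0$ and on $(1,\infty)$ one has $\psi'\ge 0$, together with monotonicity/sign information on $\psi''$ coming from either ``$\psi''$ convex'' or ``$\psi'$ concave on $(0,1)$ and convex on $(1,\infty)$''. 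On each subinterval, say $[1,a]$, the concavity/convexity hypothesis lets one compare $\psi'(a)=\psi'(a)-\psi'(1)$ with a tangent- or chord-type bound of the form $\psi'(a)\le \tfrac12(a-1)[\psi''(1)+\psi''(a)]$, and similarly on $[b,1]$; summing the two and using that the cross terms only help (they have the right sign) produces the factor $2$. The case split $b\le 1\le a$ versus $1\le b\le a$ (and the symmetric one) has to be checked, but in the ``both on the same side'' case the extra negative contribution from, e.g., $-\psi'(b)\le 0$ when $1\le b$ makes the inequality only easier.

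The main obstacle, then, is verifying the one-dimensional inequality $\tfrac12(a-b)[\psi''(a)+\psi''(b)]\ge 2[\psi'(a)-\psi'(b)]$ under each of the two alternative hypotheses, and in particular organizing the reduction to the interval endpoints $\{b,1\}$ and $\{1,a\}$ so that the losing trapezoidal factor $1$ is upgraded to $2$ by exploiting the vanishing and sign of $\psi'$ at $1$. Once this lemma is in hand, everything else is mechanical: substitute back to conclude $R_\psi(f(t)|f^\infty)\ge 4\lambda\, I_\psi(f(t)|f^\infty)$ for all $t\ge 0$ along the flow, integrate to get $I_\psi(f(t)|f^\infty)\le e^{-4\lambda t} I_\psi(f^I|f^\infty)$, and then integrate $\ddt e_\psi = -I_\psi \le -\,?$ — more precisely, one notes $\ddt e_\psi(f(t)|f^\infty) = -I_\psi(f(t)|f^\infty)$ and, since $I_\psi(f(t)|f^\infty)$ decays at rate $4\lambda$ while $e_\psi \ge 0$, a standard comparison (or the observation that $\ddt(e^{4\lambda t} e_\psi) = e^{4\lambda t}(4\lambda e_\psi - I_\psi)\le 0$, the last step again requiring $I_\psi \ge 4\lambda e_\psi$, which itself follows by integrating $R_\psi\ge 4\lambda I_\psi$ from $t$ to $\infty$) yields \eqref{e-decay}. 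I would also remark that the power-law generators $\psi_p$ and $\hat\psi_p$ from \eqref{p-entropy}, \eqref{p-entropy2} satisfy one or the other hypothesis, so the theorem applies to all the entropies introduced above.
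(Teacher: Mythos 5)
Your overall architecture (show that the second term in \eqref{diss-rate} dominates $2\lambda I_\psi$, conclude $R_\psi\ge 4\lambda I_\psi$, then apply Gronwall twice) is the paper's, and your concluding passage from \eqref{I-decay} to \eqref{e-decay} is fine. However, the reduction to your one-dimensional ``key lemma'' contains a factor-of-two bookkeeping error that makes that lemma false, so the heart of the proof cannot be completed as proposed. From \eqref{Fisher-info} one has $I_\psi=\lambda\,(f_1-f_2)\big[\psi'(2f_1)-\psi'(2f_2)\big]$, not $2\lambda(f_1-f_2)[\cdots]$. With $a=2f_1$, $b=2f_2$ and $f_1^\infty=f_2^\infty=\tfrac12$, the requirement ``second term $\ge 2\lambda I_\psi$'' reduces to
\[
\frac{a-b}{2}\,\big[\psi''(a)+\psi''(b)\big]\;\ge\;\psi'(a)-\psi'(b),
\]
which is exactly the trapezoidal (Hermite--Hadamard) bound with factor $1$ that you dismissed as insufficient; under hypothesis (i) it follows at once from convexity of $\psi''$ (this is \eqref{psi-cond} with $\kappa=1$, obtained by integrating the convexity inequality over the chord), and the proof of \eqref{I-decay} is finished. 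Your stated target, with an extra factor $2$ on the right-hand side, is simply false: for the quadratic generator $\psi_2$ (so $\psi''\equiv 2$, certainly convex) it reads $2(a-b)\ge 4(a-b)$. It had to fail, since for $\psi_2$ the rate $4\lambda$ is attained exactly by the explicit solution, so no strictly stronger differential inequality can hold; hence the whole programme of ``upgrading the factor $1$ to $2$ by splitting at $\sigma=1$'' aims at an unprovable statement.

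A second, smaller gap concerns hypothesis (ii). Even the correct inequality $\tfrac{a-b}{2}[\psi''(a)+\psi''(b)]\ge\psi'(a)-\psi'(b)$ is not true pointwise for all $a>b>0$ under ``$\psi'$ concave on $(0,1)$, convex on $(1,\infty)$'': for $\hat\psi_{5/2}$ from \eqref{p-entropy2} (which satisfies hypothesis (ii)), taking $b=1$, $a=1+h$ with small $h>0$ gives $\tfrac{15}{8}h^{3/2}$ on the left versus $\tfrac52 h^{3/2}$ on the right. What saves Case 2 is the normalization $f_1+f_2=1$, which forces $a+b=2$, hence $b\le 1\le a$ and $f_1-f_2=2(f_1-f_1^\infty)=2(f_2^\infty-f_2)$; the paper then combines the tangent-line inequalities \eqref{capri} for $\psi'$ at $\sigma=1$ with these identities. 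Your sketch, which contemplates the configuration $1\le b\le a$ and calls it ``only easier'', misses both that this configuration cannot occur for normalized data and that without the constraint the inequality can genuinely fail. Finally, your closing remark that the theorem covers all the entropies introduced above contradicts the paper's own remark: for $\psi_p$ with $p\in(2,3)$ neither hypothesis holds, and \eqref{I-decay}--\eqref{e-decay} are in general false there.
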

\begin{proof}
\underline{Case 1: $\psi''$ convex on $J$}\\
We have for $0\le s\le1$:
$$
  s\psi''(\sigma_2)+(1-s)\psi''(\sigma_1) \ge \psi''\big(s \sigma_2+(1-s)\sigma_1\big) \,.
$$
Integrating this inequality over $s\in[0,1]$ yields $\forall\, \sigma_1\ne\sigma_2\in J$:
\begin{equation}\label{psi-cond}
  \frac{\psi''(\sigma_1)+\psi''(\sigma_2)}{2} \ge
  \kappa\frac{\int_{\sigma_1}^{\sigma_2} \psi''(\sigma) \d[\sigma]  }{\sigma_2-\sigma_1}
  =\kappa\frac{\psi'(\sigma_2)-\psi'(\sigma_1)}{\sigma_2-\sigma_1} \,,
\end{equation}
where $\kappa$ is introduced only for later reference. Here we set $\kappa=1$.

We now recall that $f_1^\infty=f_2^\infty$. Hence, \eqref{diss-rate} and \eqref{psi-cond} give
\begin{equation}\label{I-ineq}
  \ddt I_\psi(f(t)|f^\infty)\le -4\lambda I_\psi(f(t)|f^\infty)\,,
\end{equation}
and \eqref{I-decay} follows. As usual in the entropy method, one next integrates \eqref{I-ineq} in time (from $t$ to $\infty$) to obtain
$$
  \ddt e_\psi(f(t)|f^\infty)\le -4\lambda e_\psi(f(t)|f^\infty)\,,
$$
and this finishes the proof for the case $\psi''$ convex.\\

\underline{Case 2: $\psi'$ concave on $(0,1)$ along with $\psi'$ convex on $(1,\infty)$}\\
We may assume without loss of generality that $f_1 > f_2$. Then $f_1/f_1^\infty > 1> f_2/f_2^\infty$, and by the tangent line inequality for the concave function $\psi'\big|_{(0,1)}$~,
$$0 = \psi'(1) \leq  \psi'\left(\frac{f_2}{f_2^\infty}\right)  +  \psi''\left(\frac{f_2}{f_2^\infty}\right) \left(\frac{f_2^\infty - f_2}{f_2^\infty} \right)\,.$$
Likewise, using the tangent line inequality for the convex function $\psi'\big|_{(1,\infty)}$~,
$$\psi'\left(\frac{f_1}{f_1^\infty}\right) \leq \psi'(1) + \psi''\left(\frac{f_1}{f_1^\infty}\right) \left(\frac{f_1- f_1^\infty}{f_1^\infty} \right) =
\psi''\left(\frac{f_1}{f_1^\infty}\right) \left(\frac{f_1- f_1^\infty}{f_1^\infty} \right)\ . $$
Altogether we have
\begin{equation}\label{capri}
\psi''\left(\frac{f_1}{f_1^\infty}\right) \left(\frac{f_1- f_1^\infty}{f_1^\infty} \right) \geq \psi'\left(\frac{f_1}{f_1^\infty}\right) \quad{\rm and}\quad 
\psi''\left(\frac{f_2}{f_2^\infty}\right) \left(\frac{f_2^\infty - f_2}{f_2^\infty} \right) \geq   -\psi'\left(\frac{f_2}{f_2^\infty}\right)\ .
\end{equation}
Now continuing to assume that $f_1 > f_2$, and using the fact that $f_1^\infty = f_2^\infty$ so that $f_1 - f_2 = 2(f_1 - f_1^\infty) = 2 (f_2^\infty - f_2)$, 
\begin{eqnarray*}
&&\big(f_1-f_2\big)\,\left[\psi''\Big(\frac{f_1}{f_1^\infty}\Big)\frac1{f_1^\infty}
  +\psi''\Big(\frac{f_2}{f_2^\infty}\Big)\frac1{f_2^\infty}\right] \\
  &&\qquad\qquad = 2\big(f_1-f_1^\infty\big)  \psi''\Big(\frac{f_1}{f_1^\infty}\Big)\frac1{f_1^\infty} 
+ 2(f_2^\infty - f_2) \psi''\Big(\frac{f_2}{f_2^\infty}\Big)\frac1{f_2^\infty}\nonumber\\
&&\qquad\qquad\geq  2\left[\psi'\left(\frac{f_1}{f_1^\infty}\right)  -  \psi'\left(\frac{f_2}{f_2^\infty}\right)\right]\nonumber\,.
\end{eqnarray*}
Therefore, 
 $$\lambda^2 \big(f_1-f_2\big)^2\,\left[\psi''\Big(\frac{f_1}{f_1^\infty}\Big)\frac1{f_1^\infty}
  +\psi''\Big(\frac{f_2}{f_2^\infty}\Big)\frac1{f_2^\infty}\right] \geq 2\lambda I_\psi(f|f^\infty)\ .$$
  Again from (\ref{diss-rate}) we obtain (\ref{I-ineq}).
\smartqed
\qed
\end{proof}

\noindent
\underline{Remark:}
\begin{enumerate}
\item Concerning the logarithmic and power law entropies from \eqref{p-entropy} one easily verifies: $\psi_p$ satisfies the condition $\psi^{IV}\ge0$ on $J$ (or the inequality \eqref{psi-cond}) exactly for $p\in[1,2]\cup[3,\infty)$.
\item For $\psi_p$ with $p\in (2,3)$, inequality \eqref{psi-cond} holds with $\kappa=\frac{p-1}{2}$ (but not for any larger constant $\kappa$). This follows from $g_p(z):=z^{p-2}+1-\frac{z^{p-1}-1}{z-1}>0$ on $\R^+$ and $g_p(0)=0$, which can be verified by elementary computations. 
Hence, for $p\in(2,3)$, the entropy method yields exponential decay of $e_p(f(t)|f^\infty)$ with the reduced rate $2(\kappa+1)\lambda=(p+1)\lambda$: 
$$
  e_p(f(t)|f^\infty) \le   e^{-(p+1)\lambda t}e_{p}(f^I|f^\infty),\,\quad t\ge 0.
$$
But the decay estimates \eqref{e-decay}, \eqref{I-decay} are in general false for $p\in (2,3)$.\\

In an alternative approach, one can verify for $2<p<3$ the estimates
$$
  \psi_p(\sigma)\le\psi_3(\sigma),\quad\forall\,\sigma\ge0;
  \qquad \psi_3(\sigma)\le C_p \psi_p(\sigma),\quad\forall\,0\le\sigma\le2,
$$
where $[0,2]$ is the maximum range of values for $\frac{f_1}{f_1^\infty}$ and $\frac{f_2}{f_2^\infty}$.
Here the constant is $C_p=\frac{\psi_3(2)}{\psi_p(2)}=\frac{4}{2^p-1-p}$.
With \eqref{e-decay} this implies 
$$
  e_p(f(t)|f^\infty) \le e^{-4\lambda t}e_3(f^I|f^\infty) 
  \le C_p  e^{-4\lambda t}e_p(f^I|f^\infty),\,\quad t\ge 0.
$$
Hence, the entropies $e_p,\,p\in(2,3)$ still decay with the optimal rate $4\lambda$, but at the price of the multiplicative constant $C_p>1$.
\item The relative entropies $\hat e_p,\;p\ge2$ from \eqref{p-entropy2} satisfy the second set of assumptions in Theorem \ref{th1}. Note that $\psi'''$ does not have to be continuous at $\sigma=1$.
\end{enumerate}


\subsection{Multi-velocity BGK models}
Now, we consider discrete space-homogeneous BGK models in $\R^n$:
The evolution of a vector $f(t)=(f_1(t),\,f_2(t),\,\ldots,\,f_n(t))^\top\in \R^n$
 is governed by 
\begin{equation} \label{ODE1:general}
 \begin{cases}
  \ddt{f}= 2\lambda \A f,\quad t\ge 0\,,\\
  f(0)   = f^I \in\R^n\,,
\end{cases} \end{equation}
for some $\lambda>0$ and a matrix $\A\in\R^{n\times n}$ in BGK form
\begin{equation} \label{matrix:A:n}
 \A = \begin{pmatrix} \rho_1 \\ \vdots \\ \rho_n \end{pmatrix} \otimes (1,\,\ldots,\,1) - \II
\end{equation}
with $\rho=(\rho_1,\ldots,\rho_n)^\top \in(0,1)^n$ such that $\sum_{j=1}^n \rho_j = 1$.

Such a matrix~$\A$ has a simple eigenvalue $0$ with left eigenvector $l_1=(1,\ldots,1)$ and right eigenvector $r_1=\rho$,
 and an eigenvalue $-1$ with geometric multiplicity $n-1$.
Since each column of $\A$ sums up to 0, the ``total mass'' of system~\eqref{ODE1:general} 
 stays constant in time, i.e. $\sum_{j=1}^n f_j(t) =\sum_{j=1}^n f_j^I$. 

Matrix $\A = (a_{jk})_{j,k=1,\ldots,n}$ 
 has only non-negative off-diagonal coefficients $a_{jk}$ $(j\ne k)$;
 such matrices are called \emph{essentially non-negative} or Metzler matrices~\cite{Se81}.
An essentially non-negative matrix $\A$ induces via~\eqref{ODE1:general}
 a semi-flow which preserves non-negativity of its initial datum $f^I$,
 i.e. $f^I_j\ge0$ for all $j=1,\ldots,n$, implies $f_j(t)\ge0$ for all $t\geq 0$.
\noindent

\underline{Remark:}
An essentially non-negative matrix is called \emph{$Q$-matrix} (or $W$-matrix in \cite{vK07})
 if it has an eigenvalue $0$ with right eigenvector $(1,\ldots,1)^\top$.
$Q$-matrices are the infinitesimal generators of continuous-time Markov processes with finite state space~\cite{No97}. 

In the following, we consider normalized positive initial data $f^I$, i.e. $\sum_{j=1}^n f^I_j=1$,
 such that the solution $f$ of~\eqref{ODE1:general} is positive and normalized for all $t\geq 0$.
Thus, as $t\to\infty$, $f(t)=f^\infty+(f^I-f^\infty)\ e^{-2\lambda t}$ converges to the normalized steady state $f^\infty:=\rho$ exponentially with rate $2\lambda$.

The study of the long-time behavior of solutions $f$ to \eqref{ODE1:general} is a classical topic,
 an approach via entropy methods can be found in~\cite{vK07,Pe07}.
Note that Perthame~\cite[\S 6.3]{Pe07} considers essentially positive matrices (i.e. off-diagonal elements are positive)
 to simplify the presentation.
However, the results generalize to irreducible $Q$-matrices,
 since only the non-negativity of off-diagonal elements is used,
 see also \cite[Remark 6.2]{Pe07}.
While \cite[Proposition 6.5]{Pe07} establishes only exponential decay in entropy,
 we aim at the optimal decay rate in the entropy approach.

We consider the time derivative of the relative entropy~\eqref{relative-entropy} under the flow of \eqref{ODE1:general}
\begin{equation} 
  \ddt e_\psi(f(t)|f^\infty)
    = \sum_{j=1}^n \psi'\Big(\tfrac{f_j(t)}{f_j^\infty}\Big) 2\lambda\ (f_j^\infty -f_j(t))
    =: -I_\psi(f(t)|f^\infty)
    \leq 0
\end{equation}
 which is non-positive due to the properties~\eqref{entropy-generator} of an entropy generator
 ($\psi'$ is an increasing function with $\psi'(1)=0$).
Next, we compute the second order derivative of $e_\psi(f(t)|f^\infty)$ w.r.t. time: 
\begin{align*} 
R_\psi(f(t)|f^\infty) :&= -\ddt I_\psi(f(t)|f^\infty) 
     = \ddt \sum_{j=1}^n \psi'\Big(\tfrac{f_j(t)}{f_j^\infty}\Big) \ddt f_j \\
    &= \sum_{j=1}^n \psi'\Big(\tfrac{f_j(t)}{f_j^\infty}\Big) \ddtddt f_j
        + \sum_{j=1}^n \psi''\Big(\tfrac{f_j(t)}{f_j^\infty}\Big) \tfrac1{f_j^\infty} \Big(\ddt f_j\Big)^2 \\
    &= 2\lambda I_\psi(f(t)|f^\infty) + \sum_{j=1}^n \psi''\Big(\tfrac{f_j(t)}{f_j^\infty}\Big) \tfrac1{f_j^\infty} \Big(\ddt f_j\Big)^2  
     \geq 2\lambda I_\psi(f(t)|f^\infty) \,,
\end{align*}
 since $\A^2=-\A$ and $\psi''\geq 0$.
This yields the non-optimal entropy dissipation rate $2\lambda$. 
To obtain a better entropy dissipation rate,
 we want to estimate the neglected term via
 \begin{equation} \label{est:neglected-term}
   \sum_{j=1}^n \psi''\Big(\tfrac{f_j(t)}{f_j^\infty}\Big) \tfrac1{f_j^\infty} \Big(\ddt f_j\Big)^2
     \geq \mu I_\psi(f(t)|f^\infty) \geq 0 
 \end{equation}
 for some $\mu>0$.
\begin{theorem}
Let $\rho=(\rho_1,\ldots,\rho_n)^\top \in(0,1)^n$ such that $\sum_{j=1}^n \rho_j = 1$ 
 and let the convex entropy generator $\psi\in C^2(J)$ satisfy for some $\mu>0$ and all $u=(u_1,\ldots,u_n)^\top \in[0,1]^n$ with $\sum_{j=1}^n u_j=1$: 
\begin{equation} \label{admissible-entropies:n}
  \sum_{j=1}^n \psi''\Big(\tfrac{u_j}{\rho_j}\Big) \tfrac1{\rho_j} (\rho_j - u_j)^2
    \geq \frac{\mu}{2\lambda}\ \sum_{j=1}^n \psi'\Big(\tfrac{u_j}{\rho_j}\Big) (u_j - \rho_j) . 
\end{equation}
Then, for all non-negative normalized initial data $f^I$, the solution~$f$ to \eqref{ODE1:general} satisfies
 \begin{align} 
  I_\psi(f(t)|f^\infty) \le e^{-(2\lambda+\mu) t}\,I_\psi(f^I|f^\infty)\,,\quad t\ge0\,,\label{I-decay:general:n}\\
  e_\psi(f(t)|f^\infty) \le e^{-(2\lambda+\mu) t}\,e_\psi(f^I|f^\infty)\,,\quad t\ge0\,.\label{e-decay:general:n}
 \end{align}
\end{theorem}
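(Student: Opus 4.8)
The plan is to run, for $n$ velocities, exactly the scheme of Theorem~\ref{th1}: the hypothesis~\eqref{admissible-entropies:n} is the multi-velocity substitute for the two conditions~\eqref{psi-cond}/\eqref{capri} used in the two-velocity case. In fact essentially all the computation is already in the lines preceding the statement, which establish
\[
 R_\psi(f(t)|f^\infty) = -\ddt I_\psi(f(t)|f^\infty) = 2\lambda\, I_\psi(f(t)|f^\infty) + \sum_{j=1}^n \psi''\Big(\tfrac{f_j(t)}{f_j^\infty}\Big)\tfrac1{f_j^\infty}\Big(\ddt f_j\Big)^2 .
\]
So the whole task reduces to bounding the last sum below by $\mu\, I_\psi$ and then running the familiar entropy--entropy-production bootstrap.

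First I would record why the trajectory stays admissible. For normalized non-negative $f^I$ the solution of~\eqref{ODE1:general} satisfies $f_j(t)\ge0$ for all $j,t$ (because $\A$ in~\eqref{matrix:A:n} is essentially non-negative) and $\sum_{j=1}^n f_j(t)=1$ (because the columns of $\A$ sum to zero). Hence, for each fixed $t$, $u:=f(t)$ lies in $\{u\in[0,1]^n:\sum_j u_j=1\}$ and is therefore a legitimate argument in~\eqref{admissible-entropies:n}; for the entropies with $J=\R^+$ one additionally uses $\ddt f_j = 2\lambda(\rho_j-f_j)\ge -2\lambda f_j$, so $f_j(t)\ge f_j^I e^{-2\lambda t}>0$ when $f_j^I>0$, which makes $\psi''(f_j/f_j^\infty)$ meaningful along the flow. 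The same mass conservation gives the identity $\ddt f_j = 2\lambda(\A f)_j = 2\lambda(\rho_j-f_j)=2\lambda(f_j^\infty-f_j)$, which is what connects the two sides of~\eqref{admissible-entropies:n} to the $R_\psi$ formula.

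Next, substituting $\big(\ddt f_j\big)^2=(2\lambda)^2(f_j^\infty-f_j)^2$ and recalling $I_\psi(f(t)|f^\infty)=2\lambda\sum_j\psi'(f_j/f_j^\infty)(f_j-f_j^\infty)$, the neglected sum equals $(2\lambda)^2\sum_j\psi''(f_j/f_j^\infty)\tfrac1{f_j^\infty}(f_j^\infty-f_j)^2$; applying~\eqref{admissible-entropies:n} at $u=f(t)$ (with $\rho_j=f_j^\infty$) bounds it below by $(2\lambda)^2\cdot\tfrac{\mu}{2\lambda}\sum_j\psi'(f_j/f_j^\infty)(f_j-f_j^\infty)=\mu\, I_\psi(f(t)|f^\infty)$, which is precisely~\eqref{est:neglected-term}. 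Inserted into the $R_\psi$ identity this gives $-\ddt I_\psi(f(t)|f^\infty)\ge(2\lambda+\mu)I_\psi(f(t)|f^\infty)$, hence~\eqref{I-decay:general:n}. Finally, since $f(t)=f^\infty+(f^I-f^\infty)e^{-2\lambda t}\to f^\infty$ we have $I_\psi\to0$ and $e_\psi\to0$ as $t\to\infty$; integrating $-\ddt I_\psi\ge(2\lambda+\mu)I_\psi$ over $[t,\infty)$ and using $\ddt e_\psi=-I_\psi$ yields $I_\psi(f(t)|f^\infty)\ge(2\lambda+\mu)e_\psi(f(t)|f^\infty)$, so $\ddt e_\psi\le -(2\lambda+\mu)e_\psi$ and~\eqref{e-decay:general:n} follows by one more integration.

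The only genuinely non-routine step is the admissibility check — confirming that the solution remains in the probability simplex (and, for $J=\R^+$, in its relative interior) so that~\eqref{admissible-entropies:n} can be invoked pointwise in time; the rest is the standard "differentiate the entropy twice, estimate, integrate back" argument, here sharpened by the extra $\mu$ recovered from the summand that was simply discarded to obtain the sub-optimal rate $2\lambda$.
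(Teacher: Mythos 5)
Your proposal is correct and follows essentially the same route as the paper: bound the previously discarded term via the hypothesis \eqref{admissible-entropies:n} evaluated along the flow (using positivity and normalization of $f(t)$, with $\ddt f_j = 2\lambda(f_j^\infty - f_j)$) to get \eqref{est:neglected-term}, deduce the differential inequality \eqref{I-ineq:general} for $I_\psi$, and integrate from $t$ to $\infty$ to transfer the decay to $e_\psi$. Your added admissibility discussion and the explicit integration-back step simply spell out details the paper leaves implicit.
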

%
\begin{proof}
The solution $f$ to \eqref{ODE1:general} is positive and normalized for all $t>0$.
Under Assumption~\eqref{admissible-entropies:n} on $\psi$,
 we obtain the estimates~\eqref{est:neglected-term}, and 
 \begin{equation}\label{I-ineq:general}
   \ddt I_\psi(f(t)|f^\infty)\le -(2\lambda +\mu)\ I_\psi(f(t)|f^\infty)\,,
 \end{equation}
 hence \eqref{I-decay:general:n} follows. 
Next, one integrates \eqref{I-ineq:general} in time (from $t$ to $\infty$) to obtain
 \[ \ddt e_\psi(f(t)|f^\infty)\le -(2\lambda +\mu)\ e_\psi(f(t)|f^\infty)\,, \]
 and this finishes the proof.
\smartqed \qed \end{proof}

For the quadratic entropy generator $\psi_2$ inequality \eqref{admissible-entropies:n} holds with $\mu=2\lambda$.
Thus we recover the optimal decay rate $4\lambda$ in \eqref{I-decay:general:n}--\eqref{e-decay:general:n}.
For the logarithmic entropy generator $\psi_1$ an estimate for $\mu$ in \eqref{admissible-entropies:n}
 has been given in \cite{DiSaCo96,BoTe06} as 
 \[ \frac{\mu}{2\lambda} \geq \sqrt{\rho_{min} (1-\rho_{min})} \qquad \text{with } \rho_{min} = \min_{j=1,\ldots,n} \rho_j . \]
Next, we consider entropy generators $\psi$ in the sense of Definition~\ref{def-rel-entr},
 such that $\psi'$ is concave on $(0,1)$ along with $\psi'$ convex on $(1,\infty)$.
Thus, for $f_1\geq f_1^\infty>0$ and $f_2^\infty\geq f_2>0$, the inequalities~\eqref{capri} continue to hold.
Distinguishing the cases $u_j<\rho_j$, $u_j>\rho_j$ and the trivial case $u_j=\rho_j$, we deduce for all $j=1,\ldots,n$,
 \begin{equation*} 
  \psi''\Big(\tfrac{u_j}{\rho_j}\Big) \tfrac1{\rho_j} (\rho_j - u_j)^2
    \geq \psi'\Big(\tfrac{u_j}{\rho_j}\Big) (u_j - \rho_j) ,
 \end{equation*}
 hence \eqref{admissible-entropies:n} holds with $\mu=2\lambda$.
However, for the entropy generators $\hat \psi_p$ in \eqref{p-entropy2} with $p\geq 2$ 
 the optimal value is $\mu=(p-1)2\lambda$.

In the following,
 we restrict ourselves to $n=2$ and determine the best constant for some polynomial entropy generators:
\begin{lemma}
Let $\rho_1, \rho_2\in(0,1)$ with $\rho_1+\rho_2=1$.
The entropy generator $\psi(\sigma)$ satisfies
 condition~\eqref{admissible-entropies:n} with
 \[ 1\geq \frac{\mu}{2\lambda} =\begin{cases}
     1 &\text{for}\quad \psi(\sigma)=\psi_2(\sigma) , \\
     2 \min \{\rho_1,\ \rho_2\} &\text{for}\quad \psi(\sigma)=\psi_3(\sigma) , \\
     2-2\sqrt{1-3 \rho_2 \ (1-\rho_2)}>0 &\text{for}\quad \psi(\sigma)=\psi_4(\sigma) . 
                   \end{cases}
 \]
\end{lemma}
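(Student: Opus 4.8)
The plan is to reduce the two–point inequality~\eqref{admissible-entropies:n} to a single homogeneous polynomial inequality on the positive quadrant, and then to determine the optimal constant for each of the three generators by elementary calculus.

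First I would use the two linear constraints. Since $u_1+u_2=1=\rho_1+\rho_2$, the quantities $\rho_1-u_1$ and $u_2-\rho_2$ coincide; calling this common value $t$, we get $(\rho_1-u_1)^2=(\rho_2-u_2)^2=t^2$. Introduce the normalized variables $x:=u_1/\rho_1\ge0$ and $y:=u_2/\rho_2\ge0$, which satisfy $\rho_1x+\rho_2y=1$ and $t=\rho_1\rho_2(y-x)$. Using $\psi_p'(\sigma)=p(\sigma^{p-1}-1)$, $\psi_p''(\sigma)=p(p-1)\sigma^{p-2}$ and the factorization $y^{p-1}-x^{p-1}=(y-x)\sum_{i=0}^{p-2}x^{i}y^{p-2-i}$, both sides of~\eqref{admissible-entropies:n} pick up the common nonnegative factor $p\,t^2/(\rho_1\rho_2)$. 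Cancelling it (the case $t=0$ being trivial), the inequality with $c:=\mu/(2\lambda)$ becomes, for $p\in\{2,3,4\}$,
$$(p-1)\bigl(\rho_2x^{p-2}+\rho_1y^{p-2}\bigr)\ \ge\ c\sum_{i=0}^{p-2}x^{i}y^{p-2-i}\qquad\text{for all }x,y\ge0,$$
the constraint $\rho_1x+\rho_2y=1$ being droppable by homogeneity. Hence the optimal $c$ is exactly the minimum over the closed first quadrant of the ratio of the left side to the right side.

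For $p=2$ this reads $\rho_1+\rho_2\ge c$, so the optimal value is $c=1$. For $p=3$ it reads $2(\rho_2x+\rho_1y)\ge c(x+y)$, and since the left side is the convex combination $2\rho_2\cdot\frac{x}{x+y}+2\rho_1\cdot\frac{y}{x+y}$ of $2\rho_1$ and $2\rho_2$, its minimum over $x,y\ge0$ is $2\min\{\rho_1,\rho_2\}$, attained on a coordinate axis (i.e. at $u=(1,0)$ or $u=(0,1)$). For $p=4$ one must minimize $g(x,y):=\dfrac{3(\rho_2x^2+\rho_1y^2)}{x^2+xy+y^2}$; by homogeneity set $y=1$ and minimize $g(x)=\dfrac{3(\rho_2x^2+\rho_1)}{x^2+x+1}$ on $[0,\infty)$. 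The numerator of $g'$ is a positive multiple of $\rho_2x^2+2(\rho_2-\rho_1)x-\rho_1$, which is negative at $x=0$ and has positive leading coefficient, so $g$ first decreases and then increases, and the unique positive root $x_*=\bigl(\rho_1-\rho_2+\sqrt{1-3\rho_1\rho_2}\bigr)/\rho_2$ is the global minimizer. At a critical point $g(x_*)$ equals the ratio of derivatives $6\rho_2x_*/(2x_*+1)$; substituting $x_*$ and using $\rho_1+\rho_2=1$, a short computation gives $g(x_*)=2-2\sqrt{1-3\rho_1\rho_2}=2-2\sqrt{1-3\rho_2(1-\rho_2)}$.

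Finally, the bound $c\le1$ in all three cases is immediate: $2\min\{\rho_1,\rho_2\}\le1$ because $\min\{\rho_1,\rho_2\}\le\frac12$, and $2-2\sqrt{1-3\rho_1\rho_2}\le1$ because $\rho_1\rho_2\le\frac14$ forces $1-3\rho_1\rho_2\ge\frac14$. I expect the $p=4$ case to be the main obstacle: one needs the explicit critical value to collapse to the stated closed form, which is a genuine (if routine) algebraic simplification, and one must justify that the interior critical point, rather than an endpoint of $[0,\infty)$, realizes the minimum — which is precisely what the sign pattern of $g'$ furnishes.
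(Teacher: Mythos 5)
Your proposal is correct and follows essentially the same route as the paper: the substitution $v_j=u_j/\rho_j$ together with $\rho_1-u_1=\rho_1\rho_2(v_2-v_1)$ and homogeneity reduces \eqref{admissible-entropies:n} to the same one-variable polynomial inequality, and your constants (including the closed form $2-2\sqrt{1-3\rho_1\rho_2}$, which I checked simplifies correctly) agree with the paper's. The only difference is cosmetic: for $\psi_4$ the paper fixes $\tilde\mu$ and imposes that the quadratic $g(z)=z^2(3\rho_2-\tilde\mu)-\tilde\mu z+3\rho_1-\tilde\mu$ stay nonnegative via its discriminant, while you minimize the equivalent ratio $3(\rho_2x^2+\rho_1)/(x^2+x+1)$ directly; both arguments are complete and give the same optimal constant.
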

\begin{proof}
For $n=2$, 
 the assumptions on $\rho$ and $u$ in \eqref{admissible-entropies:n} imply 
 \begin{equation*}
  -(\rho_2 -u_2) = \rho_1 -u_1 = \rho_1 (u_1 +u_2) -u_1 
   = \rho_1 u_2 - \rho_2 u_1 = \rho_1 \rho_2 (\tfrac{u_2}{\rho_2} -\tfrac{u_1}{\rho_1}).
 \end{equation*}
Thus condition \eqref{admissible-entropies:n} is equivalent to
\begin{equation*}
  \sum_{j=1}^2 \psi''\Big(\tfrac{u_j}{\rho_j}\Big) \tfrac1{\rho_j} \rho_1^2 \rho_2^2 (\tfrac{u_2}{\rho_2} -\tfrac{u_1}{\rho_1})^2
    \geq \frac{\mu}{2\lambda}\ \sum_{j=1}^2 \psi'\Big(\tfrac{u_j}{\rho_j}\Big) (-1)^j \rho_1 \rho_2 (\tfrac{u_2}{\rho_2} -\tfrac{u_1}{\rho_1}) \geq 0.
\end{equation*}
Setting $v_1:=u_1/\rho_1$ and $v_2:=u_2/\rho_2$, we deduce for $\psi_p(\sigma)$, $p>1$,
 \[ (p-1) \big[ v_1^{p-2} \rho_2 + v_2^{p-2} \rho_1 \big] (v_1 -v_2)^2 
      \geq \frac{\mu}{2\lambda} \big[ v_1^{p-1} -v_2^{p-1}\big] (v_1 -v_2)
    \quad \forall v_1, v_2\geq 0 .
 \]
Moreover, for $v_2>0$, dividing by $v_2^p$ and defining $z:=v_1/v_2$, we obtain
 \[ (p-1) \big[ z^{p-2} \rho_2 + \rho_1 \big] (z -1)^2 
      \geq \frac{\mu}{2\lambda} \big[ z^{p-1} -1\big] (z -1) \qquad \forall z\geq 0 .
 \]

We show the statement for the quartic entropy generator $\psi_4(\sigma)$, 
 the (simpler) proof for quadratic and cubic entropy generators is omitted.
For~$\psi_4$, condition~\eqref{admissible-entropies:n} is equivalent to
 \[ g(z) := z^2 (3 \rho_2 -\tilde{\mu}) -\tilde{\mu} z + 3 \rho_1 -\tilde{\mu} \geq 0 \qquad \forall z\geq 0 \]
 with $\tilde{\mu}:=\mu/(2\lambda)$.
Evaluating $g(z)$ at $z=0$ and taking the limit $z\to\infty$,
 we deduce the necessary conditions $3 \rho_1 \geq \tilde{\mu}$ and $3 \rho_2 > \tilde{\mu}$, respectively.
The minimum of $g(z)$ on $z\in(0,\infty)$
 is zero,
 iff $\tilde{\mu}$ solves $\tilde{\mu}^2 -4 (3 \rho_1 -\tilde{\mu})\ (3 \rho_2-\tilde{\mu})=0$.
This quadratic polynomial has a simple positive zero given by $\tilde{\mu}_0=2-2\sqrt{1-3 \rho_2 \ (1-\rho_2)}>0$,
 since $\rho_1+\rho_2=1$.

The expression $\tilde{\mu}_0=2-2\sqrt{1-3 \rho_2 \ (1-\rho_2)}>0$ attains its maximum $1$ for $\rho_2\in(0,1)$ at $\rho_2=1/2$.
\smartqed \qed \end{proof}

\noindent
\underline{Remark:}
 The quadratic entropy $\psi_2(\sigma)$ satisfies
  Assumption~\eqref{admissible-entropies:n} with $\mu=2\lambda$ for all $f^\infty_1,f^\infty_2\in(0,1)$.
 The cubic entropy $\psi_3(\sigma)$ and the quartic entropy $\psi_4(\sigma)$
  satisfy~\eqref{admissible-entropies:n} with $\mu=2\lambda$ only for $f^\infty_1 = f^\infty_2 = \tfrac12$.


\section{A discrete hypocoercive BGK model}
\label{sec:3}

In this section we consider an example for a discrete version (both in $x$ and $v$) of \eqref{bgk}. More precisely, we consider the evolution of a vector 
$f(t)=\big(f_j(t);\,j=1,...,4\big)^\top\in \R^4$, 
where its four components may correspond to the following points in the $x-v$--phase space: $(1,1)$, $(1,-1)$, $(-1,-1)$, $(-1,1)$, in this order. Its evolution is given by
\begin{align}\label{ODE2}
  \ddt{f}&= (\A+\B) f,\quad t\ge 0\,,\\
  f(0) &= f^I \in\R^4\nonumber\,.
\end{align}
Similarly to \eqref{BGK1}, the matrix $\A$ has BGK form:
\begin{equation}\label{BGK2}
  \A:=\frac12 \left(\begin{array}{cccc}
 -1 &  1 &  0 &  0 \\
  1 & -1 &  0 &  0 \\
  0 &  0 & -1 &  1 \\
  0 &  0 &  1 & -1 
\end{array}\right)
=\frac12 \left(\begin{array}{cc}
  1 & 1 \\
  1 & 1
\end{array}
\right)
\otimes \, \left(\begin{array}{cc}
  1 & 0 \\
  0 & 1
\end{array}
\right) - \II \,,
\end{equation}
where the first summand on the r.h.s.\ is the projection onto the kernel of $\A$, 
$$
  \ker \A = \spn[(1\,1\,0\,0)^\top,(0\,0\,1\,1)^\top]\,.
$$

In \eqref{ODE2}, the matrix $\B$ is skew-symmetric and reads
\begin{equation}\label{B-matrix}
  \B:= \left(\begin{array}{cccc}
  0 & -1 &  0 &  1 \\
  1 &  0 & -1 &  0 \\
  0 &  1 &  0 & -1 \\
 -1 &  0 &  1 &  0 
\end{array}\right)
= \left(\begin{array}{cc}
  0 & -1 \\
  1 &  0
\end{array}
\right)
\otimes \, \left(\begin{array}{cc}
  1 & -1 \\
 -1 &  1
\end{array}
\right) \,.
\end{equation}
$\B$ corresponds to a discretization of the transport operator in \eqref{bgk} by symmetric finite differences. We remark that  \eqref{ODE2} does not preserve positivity but, as we shall show, the hypocoercivity of \eqref{bgk}. Motivated by the theory of hyperbolic systems, one may also replace the transport operator by an upwind discretization with a then non-symmetric matrix $\tilde\B$. Then, the resulting system would preserve positivity. But it would be coercive rather than hypocoercive. Here we opt to discuss the situation with $\B$ given in \eqref{B-matrix}.

The spectrum of $\A+\B$ is given by $0,\, -\frac12 \pm  \frac{\sqrt{15}}{2}\,i,\, -1$. 
The unique, (in the 1-norm) normalized steady state of \eqref{ODE2} is given by $f^\infty=w_1=\frac14 \,(1 1 1 1)^\top$, which spans the kernel of $\A+\B$. Eigenvectors of the non-trivial eigenvalues are given by $w_{2,3}:=$\linebreak 
$(\sqrt5,\,\pm\sqrt3 i,\,-\sqrt5,\,\mp\sqrt3 i)^\top$ and $w_4:=(1,\,-1,\,1,\,-1)^\top$, and all three of them have mass 0. 
This shows that $\frac12$ is the sharp decay rate of any (normalized) $f(t)$ towards $f^\infty$.
But this ``spectral gap'' of size $\frac12$ disappears in the symmetric part of the matrix:
$\sigma\big(\A+\frac{\B+\B^\top}{2}\big)=\{ 0,\, 0,\, -1,\, -1\}$. 
Hence, the matrix $\A+\B$ is only \emph{hypocoercive} on $\{f^\infty\}^\perp$ (as defined by C.\ Villani, see \S3.2 of \cite{ViH06}).
But using an appropriate similarity transformation of $\A+\B$ one can again recover the sharp decay rate of the hypocoercive BGK-model \eqref{ODE2} via energy or entropy methods. \\

In particular, we shall use Lyapunov's direct method --see Lemma~\ref{lemma:2norm:decay} in the following subsection-- to prove decay to equilibrium for normalized solutions:
If $f^I$ is normalized, 
 then the solution to \eqref{ODE2} satisfies (for any norm on $\R^4$)
 \[
  \|f(t)-f^\infty\| \le c\,e^{-t/2} \|f^I-f^\infty\|\,,
  \quad t\ge0\,,
 \]
 with some generic constant $c\ge1$.


\subsection{Lyapunov's direct method}

We consider an ODE for a vector $f(t)=(f_1(t),\,f_2(t),\ldots,f_n(t))^\top\in \R^n$:
 \begin{equation} \label{ODE:general:n}
  \begin{cases}
   \ddt{f}= \A f,\quad t\ge 0\,,\\
   f(0)   = f^I \in\R^n\,,
  \end{cases}
 \end{equation}
 for some real (typically non-symmetric) matrix $\A\in\R^{n\times n}$. 
The origin $0$ is a steady state of \eqref{ODE:general:n}.
The stability of the trivial solution $f^0(t)\equiv 0$ is determined by the eigenvalues of matrix $\A$:
\begin{theorem} \label{thm:f0:stability}
 Let $\A\in\R^{n\times n}$ and let $\lambda_j$ ($j=1,\ldots,n)$ denote the eigenvalues of $\A$
  (counted with their multiplicity).
 \begin{enumerate}[label=(S\arabic*)]
  \item The equilibrium $f^0$ of \eqref{ODE:general:n} is stable 
    if and only (i) $\Re \lambda_j \leq 0$ for all $j=1,\ldots,n$;
    and (ii) all eigenvalues with $\Re \lambda_j=0$ are 
    non-defective\footnote{An eigenvalue is defective if its geometric multiplicity is strictly less than its algebraic multiplicity.}.
  \item \label{f0:AS} The equilibrium $f^0$ of \eqref{ODE:general:n} is asymptotically stable 
    if and only if $\Re \lambda_j< 0$ for all $j=1,\ldots,n$.
  \item The equilibrium $f^0$ of \eqref{ODE:general:n} is unstable 
    in all other cases.
 \end{enumerate}
\end{theorem}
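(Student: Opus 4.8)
The plan is to reduce the entire statement to a quantitative description of the matrix exponential $e^{t\A}$, using that, by linearity of \eqref{ODE:general:n}, every solution is $f(t)=e^{t\A}f^I$. First I would record two elementary equivalences. The equilibrium $f^0$ is Lyapunov stable if and only if $\sup_{t\ge0}\|e^{t\A}\|<\infty$: sufficiency follows from $\|f(t)\|\le\|e^{t\A}\|\,\|f^I\|$ and the choice $\delta=\eps/\sup_{t\ge0}\|e^{t\A}\|$; necessity follows by scaling, given times $t_k\ge0$ and unit vectors $v_k$ with $\|e^{t_k\A}v_k\|\to\infty$, the initial data $\tfrac{\delta}{2}v_k$ have norm below $\delta$ but grow without bound. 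Moreover $f^0$ is asymptotically stable if and only if, in addition, $\|e^{t\A}\|\to0$ as $t\to\infty$: once attractivity holds on a ball it holds for every $f^I$ by homogeneity, so in finite dimensions it is equivalent to $e^{t\A}\to0$.

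The core step is to analyze $e^{t\A}$ through the Jordan canonical form $\A=S\,J\,S^{-1}$, $J=\diag(J_1,\dots,J_r)$, where $J_\ell=\lambda_\ell\II+N_\ell$ is a Jordan block of size $m_\ell$ and $N_\ell$ is nilpotent. Then $e^{t\A}=S\,e^{tJ}\,S^{-1}$ with $e^{tJ_\ell}=e^{\lambda_\ell t}\sum_{k=0}^{m_\ell-1}\tfrac{t^k}{k!}N_\ell^k$, so every entry of $e^{t\A}$ is a finite sum of terms $q(t)\,e^{\lambda_\ell t}$ with $\deg q\le m_\ell-1$. The ``if'' directions are then immediate. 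If $\Re\lambda_j<0$ for all $j$, then with $\alpha:=\max_j\Re\lambda_j<0$ and $m$ the size of the largest Jordan block one gets $\|e^{t\A}\|\le C(1+t)^{m-1}e^{\alpha t}\to0$, giving the sufficiency in (S2) (and, being bounded, stability in this sub-case). If $\Re\lambda_j\le0$ for all $j$ and every eigenvalue on the imaginary axis is non-defective, then those eigenvalues have only size-$1$ Jordan blocks contributing bounded terms $e^{\lambda_\ell t}$ of modulus $1$, while the blocks with $\Re\lambda_\ell<0$ contribute terms tending to $0$; hence $\sup_{t\ge0}\|e^{t\A}\|<\infty$ and $f^0$ is stable, proving sufficiency in (S1).

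For the converse directions I would exhibit offending solutions built from Jordan chains. If some $\lambda_j$ has $\Re\lambda_j>0$, an eigenvector $v\ne0$ gives $\|e^{t\A}v\|=e^{\Re\lambda_j t}\|v\|\to\infty$, so $f^0$ is not stable. If $\Re\lambda_j=0$ for some defective $\lambda_j$, a length-two Jordan chain $(\A-\lambda_j\II)v_2=v_1\ne0$, $(\A-\lambda_j\II)v_1=0$ yields $e^{t\A}v_2=e^{\lambda_j t}(v_2+t\,v_1)$, whose norm grows linearly; again $f^0$ is not stable. Combined with the sufficiency part this proves (S1), and (S3) is its logical complement (``unstable'' means ``not stable''). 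Finally, if $\Re\lambda_j\ge0$ for some $j$ but $f^0$ is stable, then necessarily $\Re\lambda_j=0$ and $\lambda_j$ is non-defective, so an eigenvector $v$ gives $\|e^{t\A}v\|=\|v\|\not\to0$ and $f^0$ is stable but not asymptotically stable; this proves the ``only if'' in (S2).

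There is no genuine obstacle here --- this is the classical spectral stability theorem for linear ODEs --- and the only point that requires care is the borderline case in (S1): the polynomial prefactors produced by nontrivial Jordan blocks force one to sort the imaginary-axis eigenvalues by defectiveness rather than by real part, and one must check that a defective purely imaginary eigenvalue genuinely yields a linearly growing (hence unbounded) solution rather than a merely oscillatory one. As an alternative for the asymptotic-stability part --- closer to the ``direct method'' that names this section --- when $\Re\lambda_j<0$ for all $j$ one may instead verify that $\P:=\int_0^\infty e^{s\A^\top}e^{s\A}\,\d[s]$ converges, is symmetric positive definite, and solves the Lyapunov equation $\A^\top\P+\P\A=-\II$, so that $\ip{f}{\P f}$ is a strict Lyapunov function decaying along trajectories; this is the viewpoint developed in Lemma~\ref{lemma:2norm:decay}.
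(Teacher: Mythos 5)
Your argument is correct, but note that the paper does not prove Theorem~\ref{thm:f0:stability} at all: it is quoted as a classical fact about linear ODEs (with the nearby citation to Michel, Hou and Liu for related Lyapunov statements), so there is no in-paper proof to compare against. What you give is the standard textbook route: reduce Lyapunov stability to $\sup_{t\ge0}\|e^{t\A}\|<\infty$ and asymptotic stability to $e^{t\A}\to0$, then read both off from the Jordan form, with a length-two Jordan chain supplying the linearly growing solution that settles the defective borderline case in (S1) — exactly the delicate point you flag. The one detail worth tightening is that for non-real eigenvalues your witness solutions $e^{t\A}v$ are complex, whereas \eqref{ODE:general:n} is posed on $\R^n$; pass to real and imaginary parts (or use the real Jordan form) and note that at least one of them is unbounded, respectively non-decaying. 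Your closing alternative for the asymptotic-stability direction, taking $\P:=\int_0^\infty e^{s\A^\top}e^{s\A}\,{\rm d}s$ as a strict Lyapunov function, is also sound and is closer in spirit to how the paper actually uses this circle of ideas: Lemma~\ref{Pdefinition} and Lemma~\ref{lemma:2norm:decay} construct such $\P$ matrices, but via the spectral ansatz \eqref{simpleP} so as to extract the sharp decay rate $2\mu$ rather than mere negativity of $\A^\top\P+\P\A$.
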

To study the stability for $f^0$ via Lyapunov's direct method,
 a first guess for a Lyapunov function $V(f)$ is the (squared) Euclidean norm $V(f)=\norm{f}_2^2$.  
The derivative of $V(f)$ along solutions $f(t)$ of \eqref{ODE:general:n} satisfies
 \begin{align*}
  \ddt V(f(t))
     = \ip{f(t)}{(\A^\top + \A)f(t)} \,.
 \end{align*}
Thus the derivative depends only on the symmetric part $\tfrac12 (\A^\top + \A)$ of a matrix $\A$.
Hence the choice $V(f)=\norm{f}_2^2$ is only suitable for symmetric matrices $\A$.

To study the stability of $f^0(t)\equiv 0$ w.r.t. \eqref{ODE:general:n} for a general $\A$,
 it is standard to consider the generalized (squared) norm
 \[ V(f) := \ip{f}{\P f} \quad \text{for some symmetric, positive definite matrix $\P\in\R^{n\times n}$.} \]
The derivative of $V(f)$ along solutions $f(t)$ of \eqref{ODE:general:n} satisfies
 \begin{align}
  \ddt V(f(t))
    &= \ip{\A f(t)}{\P f(t)} + \ip{f(t)}{\P \A f(t)} = \ip{f(t)}{\RR f(t)} \,, \label{GDF:derivative}
 \end{align}
 with matrix $\RR:= \A^\top \P +\P \A$.
Conclusions on the stability of $f^0$ are possible,
 depending on the (negative) definiteness of $\RR$, see e.g. \cite[Proposition 7.6.1]{MiHouLiu15}. 

To determine the decay rate of an asymptotically stable steady state,
 we shall use the following algebraic result. 
\begin{lemma}\label{Pdefinition}
For any fixed matrix $\CC\in\C^{n\times n}$,
 let $\mu:=\min\{\Re\{\lambda\}|\lambda$ is an eigenvalue of $\CC\}$. 
Let $\{\lambda_{j}|1\leq j\leq j_0\}$ be all the eigenvalues of $\CC$ with $\Re\{\lambda_j\}=\mu$, only counting their geometric multiplicity.

 If all $\lambda_j$ ($j=1,\dots,j_0$) are non-defective, 
  then there exists a Hermitian, positive definite matrix $\P\in\C^{n\times n}$ with
  \begin{align} \label{matrixestimate}
   \CC^*\P+\P \CC &\geq 2\mu \P\,,
  \end{align}
  where $\CC^*$ denotes the Hermitian transpose of $\CC$.
 Moreover, (non-unique) matrices $\P$ satisfying~\eqref{matrixestimate} are given by
 \begin{align} \label{simpleP} 
  \P:= \sum\limits_{j=1}^n b_j \,w_j\otimes \overline{w_j}^\top\,,
 \end{align}
 where $w_j$ ($j=1,\dots,n$) denote the eigenvectors of $\CC^*$, and $b_j\in\R^+$ ($j=1,\dots,n$) are arbitrary weights.
\end{lemma}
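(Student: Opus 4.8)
The plan is to establish the explicit construction \eqref{simpleP} first, as it already settles the assertion whenever $\CC$ is diagonalizable --- the situation relevant to the applications below --- and then to reduce the general case to it via an invariant-subspace splitting. Throughout, $\CC^*$ and $w^*$ denote Hermitian transposes, and all weights $b_j$ are positive.

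For the diagonalizable case I would proceed as follows. Pick a basis $w_1,\dots,w_n$ of $\C^n$ consisting of eigenvectors of $\CC^*$, with $\CC^*w_j=\overline{\lambda_j}\,w_j$, and let $v_1,\dots,v_n$ be the dual basis, characterized by $w_j^*v_k=\delta_{jk}$; a one-line computation gives $\CC v_j=\lambda_j v_j$. With $\P:=\sum_{j=1}^n b_j\,w_j\otimes\overline{w_j}^\top$, the matrix $\P$ is Hermitian, and positive definite because $x^*\P x=\sum_j b_j\,|w_j^*x|^2$ vanishes only at $x=0$ (the $w_j$ spanning $\C^n$). Expanding a generic vector in the basis $\{v_k\}$ yields $\P\CC=\sum_j b_j\lambda_j\,w_j\otimes\overline{w_j}^\top$, and taking Hermitian transposes, $\CC^*\P=\sum_j b_j\overline{\lambda_j}\,w_j\otimes\overline{w_j}^\top$. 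Hence
$$\CC^*\P+\P\CC-2\mu\P=\sum_{j=1}^n 2b_j\,(\Re\lambda_j-\mu)\;w_j\otimes\overline{w_j}^\top\;\ge\;0\,,$$
since $\Re\lambda_j\ge\mu$ for all $j$ and each summand is a non-negative multiple of a rank-one positive semidefinite matrix. This proves \eqref{matrixestimate} for every choice of the weights $b_j>0$.

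For a general, possibly non-diagonalizable $\CC$, I would use the generalized-eigenspace decomposition $\C^n=E_0\oplus E_+$, where $E_0$ collects the eigenspaces of the eigenvalues $\lambda_j$ with $\Re\lambda_j=\mu$ and $E_+$ the generalized eigenspaces of the remaining eigenvalues; both subspaces are $\CC$-invariant, and --- here the hypothesis enters --- the non-defectiveness of the critical eigenvalues forces $\CC|_{E_0}$ to be \emph{diagonalizable}, with spectrum on the line $\Re z=\mu$. Encoding a basis adapted to this splitting in an invertible $\mathbf{W}$ with $\mathbf{W}^{-1}\CC\,\mathbf{W}=\diag(\CC_0,\CC_+)$, the previous step applied to $\CC_0$ (whose eigenvalues all have real part $\mu$) yields a Hermitian positive definite $\P_0$ with $\CC_0^*\P_0+\P_0\CC_0=2\mu\P_0$, while on the second block $-(\CC_+-\mu\II)$ is Hurwitz, so the classical Lyapunov theorem (cf.\ \cite[Proposition~7.6.1]{MiHouLiu15}) provides a Hermitian positive definite $\P_+$ with $\CC_+^*\P_+ +\P_+\CC_+-2\mu\P_+=\II$. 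Setting $\P:=(\mathbf{W}^{-1})^*\,\diag(\P_0,\P_+)\,\mathbf{W}^{-1}$ gives a Hermitian positive definite matrix, and conjugating the two block identities back through $\mathbf{W}^{-1}$ produces
$$\CC^*\P+\P\CC-2\mu\P=(\mathbf{W}^{-1})^*\,\diag\!\big(\,0\,,\;\II|_{E_+}\big)\,\mathbf{W}^{-1}\;\ge\;0\,,$$
which is \eqref{matrixestimate}.

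The basis manipulations are routine; the one delicate point --- and the main obstacle --- is that $E_0$ and $E_+$ need not be orthogonal in $\C^n$, so one cannot simply ``add'' the partial solutions $\P_0$ and $\P_+$. Transporting everything through the block-diagonalizing $\mathbf{W}$ (equivalently, declaring $E_0$ and $E_+$ orthogonal for the new inner product and using the $\CC$-invariance of each block to annihilate the cross terms in $x^*(\CC^*\P+\P\CC)x$) is exactly what repairs this, after which the non-negativity of $\CC^*\P+\P\CC-2\mu\P$ reduces blockwise to the two cases above. I would also observe that when $\CC$ itself is diagonalizable the splitting is unnecessary, and \eqref{simpleP} provides the required matrices $\P$ directly --- this is the form used in the sequel.
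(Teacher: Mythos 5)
Your argument is correct. Note first that the paper itself does not prove this lemma: it only remarks that it is the complex analog of Lemma~4.3 in \cite{ArEr14} and Lemma~2.6 in \cite{AAS15}, so there is no in-text proof to match. Your verification of \eqref{simpleP} in the diagonalizable case is essentially the standard computation from those references: from $\CC^*w_j=\overline{\lambda_j}w_j$ one gets $w_j^*\CC=\lambda_j w_j^*$, hence $\CC^*\P+\P\CC-2\mu\P=\sum_j 2b_j(\Re\lambda_j-\mu)\,w_j\otimes\overline{w_j}^\top\ge0$, with positive definiteness of $\P$ from the spanning property of the $w_j$; this is exactly the mechanism the cited lemmas rely on, and it is all that the paper actually uses (there \eqref{simpleP} is applied only to diagonalizable $\CC_k$). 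Where you go beyond the cited route is the general case in which eigenvalues off the critical line $\Re z=\mu$ may be defective, so that \eqref{simpleP} is not literally available: your splitting $\C^n=E_0\oplus E_+$ (legitimate because non-defectiveness of the critical eigenvalues makes their eigenspaces coincide with the generalized eigenspaces), the exact identity $\CC_0^*\P_0+\P_0\CC_0=2\mu\P_0$ on the critical block, the classical Lyapunov theorem for the shifted Hurwitz block, and the congruence $\P=(\mathbf{W}^{-1})^*\diag(\P_0,\P_+)\mathbf{W}^{-1}$ all check out, and the point you flag --- that $E_0$ and $E_+$ need not be orthogonal, so the partial solutions must be transported through $\mathbf{W}$ rather than added --- is indeed the only delicate step. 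This gives a self-contained existence proof covering cases the simple formula \eqref{simpleP} does not, at the (harmless) price that the resulting $\P$ is no longer of the explicit form \eqref{simpleP}; the references instead handle defectiveness by $\varepsilon$-perturbation arguments, which is the main methodological difference.
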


\noindent
\underline{Remark:}
Lemma~\ref{Pdefinition} is the complex analog of \cite[Lemma 4.3]{ArEr14} or \cite[Lemma 2.6]{AAS15}.
In particular, if $\CC\in\R^{n\times n}$ is a real matrix, 
 then the inequality~\eqref{matrixestimate} of Lemma~\ref{Pdefinition} holds true for real, symmetric, positive definite matrices $\P\in\R^{n\times n}$.
Moreover, the case of defective eigenvalues is also treated in \cite{ArEr14,AAS15}.\\

If $\A\in\R^{n\times n}$ has only eigenvalues with negative real parts,
 then the origin is the unique and asymptotically stable steady state $f^0=0$ of $\eqref{ODE:general:n}$.
Due to Lemma~\ref{Pdefinition}, 
 there exists a symmetric, positive definite matrix $\P\in\R^{n\times n}$ such that $\A^\top \P +\P\A \leq -2\mu \P$
 where $\mu = \min |\Re \lambda_j|$.
Thus, the derivative of $V(f):= \ip{f}{\P f}$ along solutions of \eqref{ODE:general:n} satisfies
 \begin{align}
  \ddt V(f(t)) 
    &\leq -2\mu V(f(t)) \qquad \text{with} \quad \mu = \min |\Re \lambda_j|,
 \end{align} 
 which implies $V(f(t))\leq e^{-2\mu t} V(f^I)$ and $\norm{f(t)}^2\leq c e^{-2\mu t} \norm{f^I}^2$ for some $c\ge1$ 
 by equivalence of norms on $\R^n$.

In contrast, we consider next matrices $\A\in\R^{n\times n}$
having only eigenvalues 
with non-positive real part.
More precisely, let $\A$ satisfy
 \begin{enumerate}[label=(A\arabic*)]
  \item \label{MatrixA:n:ConsMass}
   $\A$ has a simple eigenvalue $\lambda_1=0$ with left eigenvector $w_1^\top\in \R^n$ and right eigenvector $v_1\in\R^n$;
 \item \label{MatrixA:n:hypocoercive}
   the other eigenvalues $\lambda_j$ ($j=2,\ldots,n$) of $\A$ have negative real part.
 \end{enumerate}
Then, the space of steady states of~\eqref{ODE:general:n} consists of $\spn\{v_1\}$,
 and solutions to~\eqref{ODE:general:n} will typically not decay to $0$.
More precisely, if $f$ is a solution of ODE~\eqref{ODE:general:n} with initial datum $f^I$ satisfying $\ip{w_1}{f^I} = c$ for some $c\in\R$,
 then $\ip{w_1}{f(t)} = c$ for all $t\geq 0$.
Therefore we aim to prove the convergence of solutions $f(t)$ of \eqref{ODE:general:n} 
 for an initial datum $f^I$ (normalized in the sense of $\ip{w_1}{f^I}=1$)
 to the unique steady state $f^\infty\in\spn\{v_1\}$ (again normalized as $\ip{w_1}{f^\infty}=1$).

\begin{lemma} \label{lemma:2norm:decay}
 Let $\A\in\R^{n\times n}$ satisfy \ref{MatrixA:n:ConsMass}--\ref{MatrixA:n:hypocoercive} with non-defective eigenvalues $\lambda_j$ for $j=1,\ldots,n$.
 If $f$ is a solution of \eqref{ODE:general:n} for some normalized initial datum $f^I$ (i.e. $\ip{w_1}{f^I}=1$),
  then 
  \begin{equation} \label{A:n:general:entropy:decay}
    \norm{f(t)-f^\infty} \leq c\ \norm{f^I-f^\infty} e^{-\lambda_* \, t} \,, 
    \quad t\ge0\,,
  \end{equation}
  where $\lambda_* := \min_{\lambda_j \ne 0} |\Re \lambda_j|$ and some constant $c\ge1$.
\end{lemma}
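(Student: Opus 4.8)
The plan is to reduce Lemma~\ref{lemma:2norm:decay} to the asymptotically stable case already treated in the paragraph just above it: I would restrict the dynamics to the $\A$-invariant hyperplane complementary to the equilibrium direction and then invoke Lemma~\ref{Pdefinition} to build a suitable $\P$.

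\emph{Step 1 (reduction to a stable subspace).} First I would observe that $\ddt\ip{w_1}{f(t)} = \ip{w_1}{\A f(t)} = \ip{\A^\top w_1}{f(t)} = 0$ by \ref{MatrixA:n:ConsMass}, so $\ip{w_1}{f(t)}\equiv\ip{w_1}{f^I}=1=\ip{w_1}{f^\infty}$, and hence $g(t):=f(t)-f^\infty$ lies in $V_\perp:=\{x\in\R^n:\ip{w_1}{x}=0\}$ for all $t\ge0$. Since $\ip{w_1}{\A x}=\ip{\A^\top w_1}{x}=0$ for every $x$, we get $\ran\A\subseteq V_\perp$, so $V_\perp$ is $\A$-invariant; I set $\CC:=\A\big|_{V_\perp}$. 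Because $\lambda_1=0$ is a \emph{simple} eigenvalue of $\A$, its right eigenvector $v_1$ satisfies $\ip{w_1}{v_1}\ne0$, hence $v_1\notin V_\perp$ and $\R^n=\spn\{v_1\}\oplus V_\perp$ is an $\A$-invariant splitting. Consequently $\sigma(\CC)=\{\lambda_2,\dots,\lambda_n\}$, all with negative real part by \ref{MatrixA:n:hypocoercive}, and all non-defective (being non-defective for $\A$). Finally, since $\A f^\infty=0$ and $g(t)\in V_\perp$, the perturbation solves $\dot g=\A g=\CC g$.

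\emph{Step 2 (Lyapunov function and decay).} Next I would fix a basis of $V_\perp$ and apply Lemma~\ref{Pdefinition} (in the real form noted in the remark after it) to $-\CC$: its eigenvalues $-\lambda_2,\dots,-\lambda_n$ have real parts $|\Re\lambda_j|>0$ with smallest value $\lambda_*$, and the eigenvalues realizing it are non-defective. This produces a symmetric positive definite matrix $\P$ on $V_\perp$ with $(-\CC)^\top\P+\P(-\CC)\ge2\lambda_*\P$, i.e.\ $\CC^\top\P+\P\CC\le-2\lambda_*\P$ (alternatively one could build such a $\P$ directly on $\R^n$ as a positive-semidefinite matrix with kernel $\spn\{v_1\}$ that is positive definite on $V_\perp$). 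Setting $V(t):=\ip{g(t)}{\P g(t)}$, the computation \eqref{GDF:derivative} gives $\dot V(t)=\ip{g(t)}{(\CC^\top\P+\P\CC)g(t)}\le-2\lambda_*V(t)$, hence $V(t)\le e^{-2\lambda_* t}V(0)$. By equivalence of norms on the finite-dimensional space $V_\perp$, $\lambda_{\min}(\P)\norm{g(t)}^2\le V(t)\le e^{-2\lambda_* t}\lambda_{\max}(\P)\norm{g(0)}^2$; taking square roots yields \eqref{A:n:general:entropy:decay} with $c=\sqrt{\lambda_{\max}(\P)/\lambda_{\min}(\P)}\ge1$, and a further norm-equivalence factor absorbs the passage to an arbitrary norm on $\R^n$.

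The one genuinely delicate point is Step~1: one must use simplicity of the zero eigenvalue (so that $v_1\notin V_\perp$ and the splitting is $\A$-invariant) to identify $\sigma(\CC)$ with exactly the nonzero eigenvalues of $\A$, and one must apply Lemma~\ref{Pdefinition} with the correct sign convention, the translation being $\mu=\min\Re\lambda\ \leftrightarrow\ \lambda_*=\min_{\lambda_j\ne0}|\Re\lambda_j|$ via $\CC\mapsto-\CC$. Everything else is the standard Lyapunov--Gronwall bookkeeping.
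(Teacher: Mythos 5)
Your argument is correct, and it reaches the same inequality as the paper but by a different mechanism for neutralizing the zero eigenvalue. The paper stays on all of $\R^n$: it introduces the orthogonal projector $\P_1$ onto $\ran(\A)$ (noting $f(t)-f^\infty\in\ran(\A)$), applies Lemma~\ref{Pdefinition} not to a restriction but to the rank-one--shifted matrix $\tilde\A:=\A-\lambda_*\,v_1\otimes w_1^\top$, whose spectrum is that of $\A$ with $0$ moved to $-\lambda_*$, and then removes the shift term using $\P_1 w_1=0$ to obtain \eqref{P1-ineq}. You instead restrict the dynamics to the $\A$-invariant hyperplane $V_\perp=\{x:\ip{w_1}{x}=0\}$ (which, since $0$ is a simple eigenvalue, coincides with $\ran(\A)$, i.e.\ with the range of the paper's $\P_1$), check via $\ip{w_1}{v_1}\neq0$ that $\R^n=\spn\{v_1\}\oplus V_\perp$ is an invariant splitting so that $\sigma(\A|_{V_\perp})$ consists exactly of the non-defective nonzero eigenvalues, and apply Lemma~\ref{Pdefinition} directly to $-\A|_{V_\perp}$; the Lyapunov--Gronwall step via \eqref{GDF:derivative} is then identical. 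What your route buys is that it avoids the rank-one modification and the verification that $\P_1^\top\big((v_1\otimes w_1^\top)^\top\P+\P(v_1\otimes w_1^\top)\big)\P_1=0$, at the price of having to justify the spectral bookkeeping of the restriction (simplicity of $0$ giving $\ip{w_1}{v_1}\neq0$, inheritance of non-defectiveness) and a change-of-basis constant when returning to a norm on $\R^n$, which is harmless since $c$ is generic; the paper's construction, by contrast, produces a single positive definite $\P$ on all of $\R^n$ without choosing coordinates on the subspace. Both proofs ultimately rest on Lemma~\ref{Pdefinition}, so the difference is one of packaging rather than substance, but your packaging is a legitimate and slightly more elementary alternative.
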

\begin{proof}
To present a unified approach for symmetric and non-symmetric matrices $\A$ satisfying~\ref{MatrixA:n:ConsMass}--\ref{MatrixA:n:hypocoercive},
 we consider again the ``distorted'' vector norm $\normP{f} := \sqrt{\ip{f}{\P f}}$, 
 and the relative entropy-type functional 
 \[
   E_{\psi_2}(f(t)|f^\infty) := \normP{f(t)-f^\infty}^2
 \]
 with some real, symmetric and positive definite matrix $\P$ to be determined.
Its derivative satisfies 
 \begin{align*}
   \ddt E_{\psi_2}(f(t)|f^\infty) 
       &= \ipBig{(f-f^\infty)}{(\A^\top \P + \P\A)(f-f^\infty)} \,. 
 \end{align*}
Every matrix $\A\in\R^{n\times n}$ induces an orthogonal decomposition of $\R^n$ via 
 \[ \R^n = \ker(\A)\ \oplus\ \ran(\A^\top) = \ker(\A^\top)\ \oplus\ \ran(\A) . \]
Thus, there exists an orthogonal projection from $\R^n$ onto $\ran(\A)$,
 which is represented by a matrix $\P_1\in\R^{n\times n}$ with $\P_1^2 =\P_1$.
Due to assumption~\ref{MatrixA:n:ConsMass},
 matrix $\A^\top$ has a one-dimensional kernel which is spanned by $w_1$,
 hence $\P_1 w_1 = 0$. 
Since $w_1^\top$ is a left eigenvector of $\A$ for the eigenvalue $0$, 
 a solution $f$ of \eqref{ODE:general:n} for a normalized initial datum $f^I$ (i.e. $\ip{w_1}{f^I}=1$)
 is again normalized, i.e. $\ip{w_1}{f(t)}=1$ for all $t\geq 0$.
Thus, $\ip{w_1}{f(t)-f^\infty}\equiv 0$ iff $\ip{w_1}{f^I-f^\infty}=0$,
 which implies $f(t)-f^\infty \in \ran(\A)$ for all $t\geq 0$.
Moreover,
 \begin{align*}
   \ddt E_{\psi_2}(f(t)|f^\infty) 
       &= \ipBig{\P_1(f-f^\infty)}{\P_1^\top(\A^\top \P + \P\A)\P_1\ \P_1(f-f^\infty)} \,. 
 \end{align*}
In order to prove 
\begin{equation}\label{P1-ineq}
  \P_1^\top(\A^\top \P + \P\A)\P_1 \leq -2\lambda_* \P_1^\top \P \P_1 
\end{equation}
we consider the modified matrix $\tilde\A := \A -\lambda_* v_1\otimes w_1^\top \in\R^{n\times n}$.
Due to \ref{MatrixA:n:ConsMass}--\ref{MatrixA:n:hypocoercive} and the assumptions in Lemma~\ref{lemma:2norm:decay},
$\tilde\A$ has only non-defective eigenvalues with negative real part.
Due to Lemma~\ref{Pdefinition}, 
 there exists a real, symmetric, positive-definite matrix $\P$ such that 
 $\tilde{A}^\top \P + \P\tilde{A} \leq -2\lambda_* \P$.
This implies \eqref{P1-ineq}
since $\P_1^\top\big((v_1\otimes w_1^\top)^\top \P + \linebreak \P(v_1\otimes w_1^\top)\big)\P_1 = 0$.
Therefore we conclude
  \begin{equation} \label{Epsi2:decay}
    \ddt E_{\psi_2}(f(t)|f^\infty) 
    \leq -2 \lambda_* E_{\psi_2}(f(t)|f^\infty) \,,
  \end{equation}
and $E_{\psi_2}(f(t)|f^\infty) \leq E_{\psi_2}(f^I|f^\infty) e^{-2\lambda_* \, t}$ follows.
Moreover, $0\leq \lambda_{P,min} \II \leq\P \leq\lambda_{P,max} \II$,
 where $\lambda_{P,min}>0$ is the smallest eigenvalue
 and $\lambda_{P,max}>0$ is the biggest eigenvalue of $\P$.
Therefore, $\lambda_{P,min} \norm{f}_2^2 \leq \normP{f}^2\leq \lambda_{P,max} \norm{f}_2^2$
 and \eqref{A:n:general:entropy:decay} follows.
\smartqed \qed \end{proof}
\noindent
\underline{Remark:}
For a symmetric matrix $\A$, the choice $\P=\II$ is admissible 
and one recovers the optimal decay rate and constant $c=1$ in estimate~\eqref{A:n:general:entropy:decay}.\\

\noindent
\underline{Remark:}
Assume now that the matrix $\A$ from Lemma \ref{lemma:2norm:decay} satisfies also $\ker (\A)=\ker (\A^\top)$, which corresponds to \emph{detailed balance} for the steady state. Then, Lemma \ref{lemma:2norm:decay} allows for a simpler proof: Let $w_1=f^\infty\in\R^n$ be a normalized steady state. Then the orthogonal projector $w_1\otimes \overline{w_1}^\top$ commutes with both $\A$ and $\A^\top$. Let  $\P_1$ denote its complementary projection. Then $\ran(\P_1)$ is invariant under $e^{\A t}$, and \eqref{P1-ineq} with $\P$ from \eqref{simpleP} follows from Lemma \ref{Pdefinition} applied to $\A$ restricted to $\ran(\P_1$).

\section{Space-inhomogeneous BGK models}
\label{sec:4}

In this section we study the large-time behavior of the BGK equation \eqref{bgk2} on $L^2(\mathbf{T}^1\times\R;M_T^{-1}(v)\d[v])$ with periodic boundary conditions in $x$. We start with the $x$--Fourier series of $f$:
\begin{equation}\label{Fourier}
  f(x,v,t) = \sum_{k\in\Z} f_k(v,t)\,e^{ikx}\,,
\end{equation}
and obtain the following evolution equation for the spatial modes $f_k,\,k\in\Z$:
\begin{equation}\label{bgk-modes}
  \partial_t f_k +ikvf_k = \Q f_k = M_T(v)\int_\R f_k(v,t)\,{\rm d} v-f_k(v,t)\,,\quad k\in\Z;\;t\ge0\,.
\end{equation}
Since the BGK operator $\Q$ projects onto the centered Maxwellian at temperature $T$, it is natural to consider \eqref{bgk-modes} in the basis spanned by the Hermite functions (in $v$). This is natural for the following reason:

The Hermite polynomials (for temperature $T$) are the system of orthonormal 
polynomials that one obtains by applying the Gram-Schmidt orthonormalization
procedure to the sequence of monomials $\{v^\ell\}$  in $L^2(M_T)$; 
let $P_\ell(v)$ denote the $\ell$th Hermite polynomial.  The {\em Hermite functions} themselves are the  
functions of the form $\tilde g_\ell(v) = P_\ell(v)M_T(v)$, and evidently these are orthonormal in $L^2(M_T^{-1})$.
This is the space in which we work. 

The key fact concerning the Hermite functions is that multiplication by $v$ acts on them in a very simple way, and this is relevant
since the action of our streaming operator on the $k$th mode is multiplication by $ikv$. In fact, the 
reason for the simple nature of its action is very general and thus applies to generalizations 
of the Hermite functions. Since we use this below, we
explain the simple action from a general point of view, using only the fact that $M_T$ is even. 

Note that multiplication by $v$
is evidently self adjoint on $L^2(M_T^{-1})$. Also, for each $\ell$, $v\tilde g_\ell(v)$ is in the span of $\{\tilde g_0,\dots,\tilde g_{\ell+1}\}$. Hence,
for $m > \ell+1$
$$0 = \langle \tilde g_m, v\tilde g_\ell\rangle_{L^2(M_T^{-1})}  =  \langle \tilde g_\ell, v\tilde g_m\rangle_{L^2(M_T^{-1})}$$
from which we conclude that the $\ell,m$ matrix elements of multiplication by $v$ are zero for $|\ell - m| > 2$. Finally, by the
symmetry of $M_T$, the diagonal matrix elements are all zero. Hence, {\em in the Hermite basis, multiplication by $v$
is represented by a tridiagonal symmetric matrix that is zero on the main diagonal}. 
The operator ${\bf Q}$ is evidently diagonal in the Hermite basis. Hence the operator ${\bf L}_k := -ikv + {\bf Q}$ has a simple
tridiagonal structure.  We shall see that the matrix representing $ikv$ is
$$
ik\sqrt{T}\left(\begin{array}{cccc}
  0 &  \sqrt 1 &  0 &  \cdots \\
  \sqrt 1 & 0 &  \sqrt 2 &  0 \\
  0 &  \sqrt 2 & 0 &  \sqrt 3 \\
  \vdots &  0 &  \sqrt 3 & \ddots 
  \end{array}\right)
$$
while ${\bf Q} =\diag(0,\,-1,\,-1,\,\cdots)$.

The infinite tridiagonal matrix representing ${\bf L}_k = -ikv + {\bf Q}$ in the Hermite basis is still not easy 
to analyze directly. We cannot compute its eigenfunctions in closed form, and hence cannot apply formula \eqref{simpleP} 
to implement Lyapunov's method. 

However, we can do this for a related family of discrete velocity models, since then we are dealing with finite matrices. 
The discrete models, using the binomial approximation to the Gaussian distribution, are sufficiently close in structure to the 
continuous velocity BGK model that  they suggest an ansatz for the ${\bf P}$ operator that specifies the entropy function norm. 
In fact, a complete solution of a $2$-velocity model provides the essential hint for proving hypocoercivity of the continuous 
velocity BGK model. 

We shall present the details of this analysis in \S\ref{sec:43} below. Here, the above remark only serves as a 
motivation for our analysis of discrete velocity models, which are velocity discretizations of the BGK equation \eqref{bgk2}. 
We shall start with the two velocity case, and then discuss its generalization to $n$ velocities.

\subsection{A two velocity BGK model}
\label{sec:41}
In this section we revisit the following hyperbolic system, which can be considered as a kinetic equation with the two velocities $v=\pm\sigma$, and some parameter $\sigma>0$:
\begin{equation}\label{2vel}
  \partial_t{f_\pm}\pm \sigma \partial_x{f_\pm} =\pm \frac12 (f_- - f_+),\quad t\ge 0\,,
\end{equation}
for the distributions $f_\pm(x,t)$ of right- and left-moving particles, $2\pi$--periodic in $x$. The matrix of the interaction term on the r.h.s.\ has the form
$$
  \frac12\left(\begin{array}{cc}
 -1 & 1 \\
  1 & -1
\end{array}\right)\,,
$$
and hence \eqref{2vel} is also of BGK-form. Due to the conservation of the total mass $\int_0^{2\pi} \big(f_+(x,t)+f_-(x,t)\big) \,\d$ of \eqref{2vel}, its unique normalized steady state is $f_+^\infty=f_-^\infty=const=\frac1{4\pi} \int_0^{2\pi} \big(f_+^I(x)+f_-^I(x)\big) \,\d$.

This toy model (with the choice $\sigma=1$) was analyzed in \S1.4 of \cite{DoMoScH10} to illustrate the hypocoercivity method presented there. 
As for \eqref{bgk-modes}, we Fourier transform \eqref{2vel} in $x$ and expand it in the discrete velocity basis $\{\binom{1}{1},\,\binom{1}{-1}\}$. This yields for each mode $k\in\Z$ the following decoupled  ODE-system:
\begin{equation}\label{2vel-trans}
    \ddt{u_k}= -\CC_k\,u_k,\quad 
    \CC_k=\left(\begin{array}{cc}
    0 & ik\sigma \\
    ik\sigma & 1
\end{array}\right)\,,
\end{equation}
with $u_k(t)\in\C^2, \;k\in\Z$.
The matrices $-\CC_k$ have the eigenvalues $-\frac12 \pm \sqrt{\frac14-k^2\sigma^2}$ in the case $|k|\le \frac{1}{2\sigma}$ and $-\frac12 \pm i \sqrt{k^2\sigma^2-\frac14}$ in the case $|k|> \frac{1}{2\sigma}$.
Hence, as $t\to\infty$, $u_0(t)$ converges to an eigenvector of the 0-eigenvalue, i.e.\ $u_0^\infty=(f_+^\infty+f_-^\infty,\,0)^\top$, with the exponential rate $\lambda_0:=1$. All modes $u_k(t)$ with $k\ne0$ converge to $u_k^\infty=0$ with an exponential rate determined by the spectral gap of the matrix $\CC_k$. 
For simplicity we shall assume here that $\frac{1}{2\sigma}\not\in\N$. This avoids defective eigenvalues of the matrices $\CC_k$, but they could be included as discussed in Lemma 4.3 of \cite{ArEr14}. 
The spectral gap of the low modes (i.e.\ for $0<|k|< \frac{1}{2\sigma}$) is $\lambda_k:=\frac12 - \sqrt{\frac14-k^2\sigma^2}$, and it is $\lambda_k:=\frac12$ for the high modes. Hence, the exponential decay rate of the sequence of modes $\{u_k(t)\}_{k\in\Z}$ is given by the decay of the modes $k=\pm1$: $\lambda:=\min_{k\in\Z}\{\lambda_k\}=\Re\big(\frac12-\sqrt{\frac14-\sigma^2}\big)$. By Plancherel's theorem this is then also the convergence rate of $f(t)=(f_+(t),\,f_-(t))^\top$ towards the steady state $f^\infty=(f_+^\infty,\,f_-^\infty)^\top$.

The goal of entropy methods is to prove this exponential decay towards equilibrium, possibly with the sharp rate, by constructing an appropriate Lyapunov functional. In the hypocoercive method developed in \cite{DoMoScH10} the authors obtained, for the case $\sigma=1$ and the quadratic entropy, a decay rate bounded above by $\frac15$. But the sharp rate for this case is $\lambda=\frac12$. We shall now construct a refined Lyapunov functional that captures the sharp decay rate.

Following Lemma \ref{Pdefinition}(i) we introduce the positive definite transformation matrices $\P_0:=\II$, 
$$
  \P_k:=\left(\begin{array}{cc}
 4k^2\sigma^2 & -2ik\sigma \\
  2ik\sigma & 2-4k^2\sigma^2
\end{array}\right)\,,\quad\mbox{ for } \;\;0< |k|< \frac{1}{2\sigma}\,,
$$
and 
\begin{equation}\label{Pk-high}
  \P_k:=\left(\begin{array}{cc}
 1 & \frac{-i}{2k\sigma} \\
  \frac{i}{2k\sigma} & 1
\end{array}\right)\,,\quad\mbox{ for }  \;\;|k| > \frac{1}{2\sigma}\,.
\end{equation}
In the latter case, $\P_k$ is unique only up to a multiplicative constant, which is chosen here such that $\tr \P_k=n=2$.
We define the ``distorted'' vector norms for each mode $u_k$:
$$
  \|u_k\|_{\P_k} := \sqrt{\langle u_k,\,\P_ku_k\rangle}\,.
$$
Due to the ODE \eqref{2vel-trans} and the matrix inequality \eqref{matrixestimate} it satisfies
\begin{equation}\label{norm-ineq}
  \ddt \|u_k\|_{\P_k}^2 =- \langle u_k \,, (\CC_k^*\P_k+\P_k\CC_k) \,u_k \rangle
  \le-2\lambda_k\,\|u_k\|_{\P_k}^2\,,\quad k\in\Z\setminus \{0\}\,,
\end{equation}
and hence
\begin{equation}\label{uk-decay}
  \|u_k(t)-u_k^\infty\|_{\P_k}\le e^{-\lambda_k t}\|u_k(0)-u_k^\infty\|_{\P_k}\,,\quad t\ge0,\:\:k\in\Z\,.
\end{equation}

With this motivation we define the following norm as a Lyapunov functional for the sequence of modes:
\begin{equation}\label{normE}
  E\big(\{u_k\}_{k\in\Z}\big) := \sqrt{\sum_{k\in\Z} \|u_k\|_{\P_k}^2}\,.
\end{equation}
{}From \eqref{uk-decay} we obtain
$$
  E\big(\{u_k(t)-u_k^\infty\}\big) \le e^{-\lambda t} E\big(\{u_k(0)-u_k^\infty\}\big) \,,\quad t\ge0\,,
$$
with $\lambda=\min_{k\in\Z}\{\lambda_k\}$. Due to Plancherel's theorem, this is also a norm for the corresponding distributions $f=(f_+,\,f_-)^\top$:
$$
  E\big(\{u_k\}\big) = \|Bf\|_{L^2(0,2\pi;\R^2)}\,,
$$
where $B$ is a (nonlocal) bounded operator on $L^2(0,2\pi;\R^2)$ with bounded inverse. More precisely, $B=I+K$, where $K$ is a compact operator with $\|K\|<1$, since $\P_k\stackrel{|k|\to\infty}{\longrightarrow} \II$ (cf.\ \eqref{Pk-high}). This implies the sought-for exponential decay of $f(t)$ with sharp rate:
\begin{theorem}\label{th-periodicBGK}
Let $\frac1{2\sigma}\not\in\N$. Then the solution to \eqref{2vel} satisfies
$$
  \|f(t)-f^\infty\|_{L^2(0,2\pi;\R^2)} \le c\,e^{-\lambda t} \|f^I-f^\infty\|_{L^2(0,2\pi;\R^2)}\,,
  \quad t\ge0\,,
$$
with $\lambda=\Re\big(\frac12 - \sqrt{\frac14-\sigma^2} \big)$ and some generic constant $c>0$.
\end{theorem}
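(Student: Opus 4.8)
The plan is to run the mode-by-mode argument that has already been set up above. First I would Fourier-transform \eqref{2vel} in $x$ and expand each mode in the discrete velocity basis $\{\binom11,\binom1{-1}\}$, reducing \eqref{2vel} to the decoupled family of $2\times2$ ODE systems \eqref{2vel-trans}; by Plancherel's theorem the $L^2(0,2\pi;\R^2)$ norm of any field is, up to a fixed constant, the $\ell^2$ norm of its sequence of modes, so it suffices to control $\{u_k(t)-u_k^\infty\}_{k\in\Z}$ in a norm equivalent to $\ell^2(\Z;\C^2)$.

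Next I would establish the modal decay. Since $\tfrac1{2\sigma}\notin\N$, for every $k\neq0$ the eigenvalues of $\CC_k$ — namely $\tfrac12\pm\sqrt{\tfrac14-k^2\sigma^2}$ when $0<|k|<\tfrac1{2\sigma}$ and $\tfrac12\pm i\sqrt{k^2\sigma^2-\tfrac14}$ when $|k|>\tfrac1{2\sigma}$ — are simple, hence non-defective, with minimal real part $\lambda_k$. Lemma~\ref{Pdefinition} applied to $\CC=\CC_k$ then produces a Hermitian positive definite $\P_k$ with $\CC_k^*\P_k+\P_k\CC_k\ge 2\lambda_k\P_k$, one admissible choice being the explicit matrices displayed before the theorem (together with $\P_0:=\II$ and $u_0^\infty=(f_+^\infty+f_-^\infty,0)^\top$ for the zero mode). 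Differentiating $\|u_k(t)-u_k^\infty\|_{\P_k}^2$ along \eqref{2vel-trans} and inserting this matrix inequality gives \eqref{norm-ineq}, hence the modal estimate \eqref{uk-decay}. A short monotonicity check — $\lambda_k$ is nondecreasing in $|k|$ among the low modes, equals $\tfrac12$ for the high modes, and equals $1$ for $k=0$ — shows that $\lambda:=\min_{k\in\Z}\lambda_k$ is attained at $k=\pm1$ and equals $\Re\big(\tfrac12-\sqrt{\tfrac14-\sigma^2}\big)$, the rate claimed.

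Then I would assemble the Lyapunov functional $E(\{u_k\}):=\big(\sum_{k\in\Z}\|u_k\|_{\P_k}^2\big)^{1/2}$ from \eqref{normE}; summing \eqref{uk-decay} over $k$ gives $E(\{u_k(t)-u_k^\infty\})\le e^{-\lambda t}E(\{u_k(0)-u_k^\infty\})$. Writing $B$ for the bounded operator on $L^2(0,2\pi;\R^2)$ that acts on the $k$th mode by multiplication by $\P_k^{1/2}$, one has $E(\{u_k\})=\|Bg\|_{L^2(0,2\pi;\R^2)}$. Since there are only finitely many low modes and each corresponding $\P_k$ is positive definite ($\det\P_k=4k^2\sigma^2(1-4k^2\sigma^2)>0$ and $\tr\P_k=2$ for $0<|k|<\tfrac1{2\sigma}$, so its eigenvalues lie in $(0,2)$), while $\P_k=\II+O(|k|^{-1})$ as $|k|\to\infty$ by \eqref{Pk-high}, the family $\{\P_k\}$ is uniformly positive definite and uniformly bounded; hence $B$ is bounded with bounded inverse (in fact $B=I+K$ with $K$ compact, $\|K\|<1$). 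Combining,
\[
\|f(t)-f^\infty\|_{L^2}\le\|B^{-1}\|\,E(\{u_k(t)-u_k^\infty\})\le\|B^{-1}\|\,e^{-\lambda t}E(\{u_k(0)-u_k^\infty\})\le\|B^{-1}\|\,\|B\|\,e^{-\lambda t}\|f^I-f^\infty\|_{L^2},
\]
which is the claim with $c:=\|B^{-1}\|\,\|B\|\ge1$.

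The step I expect to be the main obstacle is this last one — proving that $E$ and the $L^2$ norm are genuinely equivalent, i.e.\ that the eigenvalues of $\{\P_k\}_{k\in\Z}$ are uniformly bounded away from $0$ and from $\infty$; without it the modewise exponential decay would not transfer to the physical-space norm. This splits into the finite positivity check for the low-mode matrices and the asymptotics $\P_k\to\II$ for the high modes from \eqref{Pk-high}. The hypothesis $\tfrac1{2\sigma}\notin\N$ is used only here and in the previous step, to keep the $\CC_k$ non-defective so that Lemma~\ref{Pdefinition} applies in the form stated; defective modes could be included via the variant in \cite{ArEr14}.
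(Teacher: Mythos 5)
Your proposal is correct and follows essentially the same route as the paper: Fourier decomposition into the modal systems \eqref{2vel-trans}, construction of the matrices $\P_k$ via Lemma~\ref{Pdefinition} (with the explicit low- and high-mode formulas), the modal decay \eqref{uk-decay}, the Lyapunov functional \eqref{normE}, and the norm equivalence through the uniform positivity and boundedness of $\{\P_k\}$ (equivalently $B=I+K$ with $\|K\|<1$), which is exactly how the paper transfers the modal decay to the $L^2$ norm. Your explicit verification $\det\P_k=4k^2\sigma^2(1-4k^2\sigma^2)>0$, $\tr\P_k=2$ for the finitely many low modes is a correct way to make the equivalence quantitative.
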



\subsection{A multi-velocity BGK model}
\label{sec:42}

We now turn to a discrete velocity model analog of the linear BGK equation (\ref{bgk2}), and we shall establish its hypocoercivity.   Fixing unit temperature $T$, recall 
that as a consequence of the Central Limit Theorem,
the measure $M_1(v){\rm d}v$ is the (weak)  limit  of a sequence of discrete probability measures $\{\mu_n\}$ where
$$\mu_n  :=\sum_{j=0}^n 2^{-n} \binom{n}{j} \delta_{(2j - n)/\sqrt{n}}\ ,$$
where $\delta_y$ denotes the unit mass at $y\in \R$.  Each of the probability  measures $\mu_n$, $n\in \N$, has zero mean and unit variance. 

The Hermite polynomials have a natural discrete analog, namely the {\em Krawtchouk polynomials}.  A good reference containing proofs of all
of the facts we use below is the survey \cite{Col11}. (We are only concerned with a special family of the more general Krawtchouk polynomials
discussed in \cite{Col11}, namely the $s=2$ case in the terminology used there.)
The standard 
Krawtchouk polynomials of order $m$ are a set of $n+1$ polynomials $K_{n,m};\,m=0,...,n$ that are orthogonal with respect to the probability 
measure 
$$\omega_n  =\sum_{j=0}^n 2^{-n} \binom{n}{j} \delta_j\ ,$$
and are given by the following generating function:
\begin{equation}\label{kraw1}
(1+ t)^{n-v} (1-t)^v = \sum_{m=0}^n t^m K_{n,m}(v)\ .
\end{equation}
The leading coefficient of $K_{n,m}$ has the sign $(-1)^m$. 
One  has the orthogonality relations
\begin{equation}\label{orthong1}
\int_\R K_{n,m} K_{n,\ell}\,{\rm d}\omega_n = \begin{cases} \tbinom{n}{m} & m = \ell\ , \\ 0 & m \neq \ell\ .\end{cases}
\end{equation}
Then the {\em discrete Hermite polynomials} $H_{n,m}$ are defined by
\begin{equation}
H_{n,m}(v) := (-1)^m  \binom{n}{m}^{-1/2} K_{n,m} \left(\frac{n}{2}  + \frac{\sqrt{n}}{2} v\right) \qquad {\rm for}\quad m = 0,1,\dots, n\,;\ v\in\R\ .
\end{equation}
Then $\{H_{n,0}, \dots,H_{n,n}\}$ is the set  of $n+1$ polynomials that are orthogonal with respect to $\mu_n$, and hence are an 
orthonormal basis for $L^2(\R; \mu_n)$, and for each $m$ and $v$, $\lim_{n\to \infty}H_{n,m}(v) = \tfrac1{\sqrt{m!}} H_m(v)$. 
The analog of the crucial Hermite--recurrence relation (\ref{recur}) for the Krawtchouk polynomials is
$$(m+1)K_{n,m+1} = (n-2v)K_{n,m} - (n-m+1)K_{n,m-1}\ .$$
Rewriting this in terms of the discrete Hermite polynomials, one obtains
\begin{equation}\label{recur2}
v H_{n,m}(v) = \sqrt{m+1}\left(\frac{n-m}{n}\right)^{1/2}H_{n,m+1}(v) +  \sqrt{m}\left(\frac{n-m+1}{n}\right)^{1/2}H_{n,m-1}(v) \ .
\end{equation}
Notice that this reduces to \eqref{recur} in the limit $n\to\infty$ (up to the multiplication by the standard Gaussian). 

We are now ready to produce a discrete velocity analog of (\ref{bgk2}) in continuous $x$-space. The phase space is $\mathbf{T}^1\times [v_0,\dots,v_n]$
where the discrete velocity $v_j = (2j - n)/\sqrt{n}$. Our phase space density at time $t$ is a vector ${\bf f}(x,t)$ with $n+1$ non-negative entries $f_0(x,t), \dots, f_n(x,t)$,
such that 
$$
  \sum_{j=0}^n \left(\int_{\mathbf{T}^1} f_j(x,t){\rm d}x \right) = 1\ .
$$
We associate to ${\bf f}(x,t)$  the probability measure on the phase space given by
$$\sum_{j=0}^n f_j(x,t)  \delta_{(2j - n)/\sqrt{n}}\ .$$
The discrete unit Maxwellian (of order $n$) is the vector ${\bf m}  = 2^{-n}\Big( \tbinom{n}{0}, \tbinom{n}{1},\dots, \tbinom{n}{n}\Big)^\top$. 
Then the order $n$ discrete analog of (\ref{bgk2}) is the equation
\begin{equation}\label{n-velBGK}
  \partial_t{\bf f}(x,t)  + {\bf V} \partial_x {\bf f}(x,t) = {\bf m} \left(\sum_{j=0}^n f_j(x,t)\right)  - {\bf f}(x,t)\ , \quad t\ge0;\,x\in {\bf T}^1\,,
\end{equation}
with the $(n+1)\times(n+1)$ matrix ${\bf V}=\diag(v_0,...,v_n)$.
Proceeding as for \eqref{bgk-modes} yields the evolution equation for the spatial modes 
${\bf f}_k(t),\,k\in\Z$. 
Expanding ${\bf f}_k$ in the discrete Hermite basis $\{H_{n,m}(v_j);\,j=0,...,n\}_{m=0,...,n}$, we obtain for each $k$ the equation
$$
  \partial_t \hat {\bf f}_k + ik {\bf L}_1\hat {\bf f}_k = {\bf L}_2 \hat {\bf f}_k\,,
  \quad t\ge0;\,k\in\Z\,,
$$
where the vector $\hat {\bf f}_k(t)\in\C^{n+1}$ represents the basis coefficients of ${\bf f}_k(t)$.
As before  $\LL_2$ is the $(n+1)\times(n+1)$ matrix $\LL_2=\diag(0,\,-1,\,-1,\,\cdots)$, and $\LL_1$ is the symmetric tridiagonal matrix
whose diagonal entries are all zero, and whose superdiagonal sequence is given by 
$$ [\LL_1]_{m,m+1} = \sqrt{m+1} \left(\frac{n-m}{n}\right)^{1/2}\,; \qquad m =0 ,1,\dots, n-1\ .$$
For example, with $n=4$,
$$
  \LL_1= \left(\begin{array}{ccccc}
  0 &  1 &  0 &  0 & 0 \\
   1 & 0 &  \sqrt {3/2} &  0 &0 \\
  0 &  \sqrt {3/2} & 0 &  \sqrt {3/2} &0 \\
 0 &  0 &  \sqrt {3/2} & 0 & 1\\
 0 & 0 & 0 & 1 & 0
  \end{array}\right)\,.
$$

Next we discuss the time decay of the solution to \eqref{n-velBGK} towards ${\bf f}^\infty={\bf m}$. We shall focus on the example with order $n=4$, but the other cases behave similarly.
Computing for the modes $k=\pm1$ the eigenvalues of $\mp i\LL_1 + \LL_2$ we find two complex pairs and one real eigenvalue $\lambda_0 = -0.526948302245121...$
which has the least negative real part, and hence determines the exponential decay rate of ${\bf f}_{\pm1}(t)$. This situation for higher $|k|$ is similar, but even better, with faster decay. 
To see this we write the eigenvalue equation for the matrices $-ik\LL_1 + \LL_2,\,k\in\Z$ as
$$
  h_0(\lambda):=\lambda(\lambda+1)^4 = -k^2(\lambda+1)^2(5\lambda+1)-k^4(4\lambda+\frac52)=:-k^2h_2(\lambda)-k^4h_4(\lambda)\,.
$$
The function $h_0$ is negative on $(-1,0)$, $-h_2$ on $(-\frac15,0]$ and $-h_4$ on $(-\frac58,0]$ (cf. Figure \ref{fig:h-curves}). For $k\ne0$, the function $k^2h_2(\lambda)+k^4h_4(\lambda)$ has exactly one real zero, $\tilde\lambda(k)$, and it is nonnegative on $[\tilde\lambda(k),0]$. For each fixed $k\in\Z$, the function $k^2h_2+k^4h_4$ is strictly increasing w.r.t.\ $\lambda$.
Hence, 
the above eigenvalue equation has exactly one real zero $\lambda_0(k)$, and it lies in $(-\frac58,0]$. For each fixed $\lambda\in [\tilde\lambda(k),0]$, the function $k^2h_2+k^4h_4$ is strictly increasing w.r.t.\ increasing $|k|$. Hence, $\lambda_0(k)$ decreases monotonically (w.r.t.\ $|k|$) towards $-\frac58$.
\begin{figure}[ht!]
\begin{center}
 \includegraphics[scale=1]{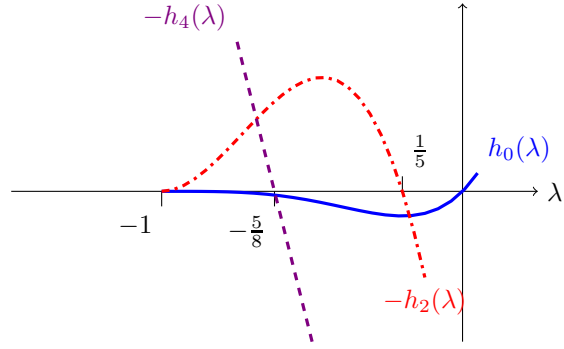}
 \caption{Functions appearing in the eigenvalue equation of $-ik\LL_1+\LL_2$; solid blue curve: $h_0(\lambda)$; red dash-dotted curve: $-h_2(\lambda)$; purple dashed line: $-h_4(\lambda)$.
  (colors only online)}
 \label{fig:h-curves}
\end{center}
\end{figure}

This proves that the 5 velocity model is hypocoercive, at least in the norm $E$ defined in \eqref{normE} (with the transformation matrices $\P_k$ now corresponding to $-\CC_k:=\mp ik\LL_1+\LL_2$). The sharp decay rate is given by $\lambda_0 = -0.526948302245121...$ . 

To establish a uniform-in-$k$ spectral gap was already cumbersome for the case $n=4$, and it becomes even more involved for larger $n$. In the following section we present a much simpler strategy, at the price of giving up sharpness of the decay rate.
But more importantly, that strategy will also be applicable for the continuous velocity case, which is represented by a tridiagonal ``infinite matrix''.


\subsection{A continuous velocity BGK model}
\label{sec:43}

In this subsection we continue our discussion of the space-inhomogeneous BGK equation \eqref{bgk2} or, equivalently, \eqref{bgk-modes}. This will yield the proof of Theorem \ref{bgk-decay}.

Using the probabilists' Hermite polynomials,
\begin{equation} \label{hermite-polynom}
  H_m(v) := (-1)^m e^{\frac{v^2}{2}}\frac{{\rm d}^m}{{\rm d} v^m}e^{-\frac{v^2}{2}}\,,\quad m\in\N_0\,,
\end{equation}
we define the normalized Hermite functions
\begin{equation} \label{hermite-fct}
  g_m(v):=(2\pi m!)^{-1/2} H_m(v)\,e^{-\frac{v^2}{2}},\quad \mbox{and}\quad
  \tilde g_m(v):=\frac{1}{\sqrt T} g_m\big(\frac{v}{\sqrt T}\big)\,.
\end{equation}
They satisfy
$$
  \int_\R \tilde g_m(v)\tilde g_n(v) M_T^{-1}(v)\,{\rm d} v = \delta_{mn}\,
$$
and the recurrence relation
\begin{equation}\label{recur}
  v \,\tilde g_m(v) = \sqrt T\big[\sqrt{m+1}\, \tilde g_{m+1}(v) + 
  \sqrt{m} \,\tilde g_{m-1}(v)\big]\,.
\end{equation}
In the basis $\{\tilde g_m\}_{m\in\N_0}$ Equation \eqref{bgk-modes} becomes
\begin{equation}\label{bgk-hermite}
  \partial_t \hat {\bf f}_k +ik\sqrt T\,\LL_1 \hat {\bf f}_k = \LL_2 \hat {\bf f}_k\,,\quad t\ge0;\;k\in\Z\,.
\end{equation}
Here, the ``infinite vector'' $\hat {\bf f}_k(t)\in l^2(\N_0)$ is the representation of the function $f_k(v,t)\in L^2(\R;M_T^{-1})$ in the Hermite function basis, and the operators $\LL_1,\,\LL_2$ are represented by ``infinite matrices'' as
\begin{equation}\label{L1L2}
  \LL_1= \left(\begin{array}{cccc}
  0 &  \sqrt 1 &  0 &  \cdots \\
  \sqrt 1 & 0 &  \sqrt 2 &  0 \\
  0 &  \sqrt 2 & 0 &  \sqrt 3 \\
  \vdots &  0 &  \sqrt 3 & \ddots 
  \end{array}\right)
\,,\quad \LL_2=\diag(0,\,-1,\,-1,\,\cdots)\,.
\end{equation}

Next we shall prove the exponential decay of \eqref{bgk-hermite}, using a modified strategy compared to \S\ref{sec:42}. For the 5 velocity model there, it was possible (with some effort) to determine the sharp spectral gap of the matrices $-ik\LL_1+\LL_2$, uniform in all modes $k$. But since this seems not (easily) possible for the infinite dimensional case in \eqref{bgk-hermite}, we shall construct now approximate transformation matrices $\P_k$ that yield at least a (reasonable) lower bound on the spectral gap, and hence on the decay rate. For simplicity we set now $T=1$, as the temperature could be ``absorbed'' into the parameter $k$ by scaling.

Let $\A$ be an $(n+1)\times (n+1)$ tridiagonal matrix that is zero on the main diagonal.  That is $A_{i,j} = 0$
unless $ j = i+1$ or $i = j-1$.  We further suppose that $\A$ is real and symmetric, so that $\A$  is characterized
by the numbers $a_1,\dots,a_n$ where $a_j = A_{j-1,j}$.   Let $\B = {\rm diag}(0, -1,\dots,-1)$. Finally, for $k\in \Z$, consider the matrix
$-\CC_k := -ik\A + \B$. 

In the simplest case $n=1$ with $a=1$, we obtain 
$$\A = \left(\begin{array}{cc} 0 & 1\\ 1 & 0\end{array}\right)\,,  \qquad  \B = \left(\begin{array}{cc} 0 & \phantom{-}0\\ 0 & -1\end{array}\right) 
\quad{\rm  and}\quad   -\CC_k = \left(\begin{array}{cc} \phantom{-}0 & -ik\\ -ik &  -1\end{array}\right) \ .$$

For this matrix $\CC_k$, the transformation matrix $\P_k$ was already computed in \eqref{Pk-high} (with $\sigma=1$). For $k\ne0$ a simple computation yields
$$\CC^*_k \P_k + \P_k\CC_k = \P_k$$
so that with this choice of $\P_k$, Lyapunov's method yields exponential decay of the ODE-sequence $\ddt u_k=-\CC_k u_k,\,k\in\Z$ at the optimal rate $e ^{-t/2}$ (cf.\ \S\ref{sec:41}). \\

We now turn to $n> 1$. For $k\ne0$ define $\P_k$ to be the $(n+1)\times (n+1)$ matrix whose upper left $2\times 2$
block is  $\left(\begin{array}{cc} 1 & -i\alpha/k\\ i\alpha/k & 1\end{array}\right)$, where $0 < \alpha <k$ is a parameter to be chosen below,
and with the remaining diagonal entries being $1$, and all other entries being $0$.  Then the eigenvalues of $\P_k$ are $(k+\alpha)/k$, $1$ and 
$(k-\alpha)/k$, so that $\P_k$ is positive definite, and close to the identity for large $k$. 

We take $-\CC_k := -ik \A + \B$ as above.
Then
$$\CC_k^*\P_k + \P_k\CC_k = -ik(\A\P_k - \P_k\A) - (\B\P_k+\P_k\B)\ ,$$
and its upper left  $3\times 3$ block reads
\begin{equation}\label{3x3matrix}
\left(\begin{array}{ccc} 2a_1\alpha  & -i\alpha/k & a_2\alpha  \\ 
i\alpha/k & 2-2a_1\alpha & 0\\
a_2\alpha & 0& 2\end{array}\right)\ .  
\end{equation}
The lower right $(n-2)\times (n-2)$ block is $2$ times the identity, the off diagonal blocks are zero. 
In all of our finite dimensional approximations to \eqref{L1L2} we have $a_1 =1$.  The value of $a_2$ is different for the different discrete velocity models, but to simplify matters, we only present calculations for $a_2=\sqrt2$, which is the value for the limiting continuous velocity model.

The determinants of the upper left $2\times 2$ and $3\times 3$ blocks read, respectively,
$$\delta_2(\alpha,k) = 
  \alpha \left(4 - \left(4 + \frac {1}{k^2}\right)\alpha\right) \quad{\rm and}\quad 
 \delta_3(\alpha,k)= 4\alpha\left((\alpha-2)(\alpha-1)-\frac{\alpha}{2k^2}\right)\ .
$$
For each $k$, $\delta_3(\alpha,k)/\alpha$ has two positive roots, and is negative between them. Hence our matrix is positive definite 
when $\alpha$ lies between zero and the smaller positive root of $\delta_3(\alpha,k)/\alpha$. This root is least when $k=1$, when it has the value 
$\frac{7-\sqrt{17}}{4}\approx 0.719$.
Hence, by Sylvester's criterion, $\CC_k^*\P_k + \P_k\CC_k$ is positive definite for all $k\ne0$ if and only if 
$\alpha\in(0,\frac{7-\sqrt{17}}{4})\approx(0,0.719)$.
Note that also $\delta_2(\alpha,k)>0$ for these $\alpha,k$.

When $\alpha$ is in this range, our $3\times 3$ matrix \eqref{3x3matrix} has three positive eigenvalues $\lambda_1$, $\lambda_2$, $\lambda_3$ which we may take to be arranged  in increasing order. Then
$$\sqrt{\lambda_1} = \frac{\sqrt{\delta_3(\alpha,k)}}{\sqrt{\lambda_2\lambda_3}} \geq 
 \frac{2\sqrt{\delta_3(\alpha,k)}}{\lambda_2+\lambda_3} > \frac{\sqrt{\delta_3(\alpha,k)}}{2 }$$
 since the trace of our matrix is $4$.  Hence, the least eigenvalue $\lambda_1$  of our $3\times 3$ matrix satisfies
 $$\lambda_1 =\lambda_1(\alpha,k) \geq \frac14 \delta_3(\alpha,k)\ .$$
 Hence we choose $\alpha=\alpha_k$ to maximize $\delta_3(\alpha,k)$ between its first two roots. Its maximal value, $\delta_3(\alpha_k,k)$, depends on $k$,
 but it is easily seen to be least for $k=1$ with $\alpha_1=\tfrac13$. Simple computations and estimates then
 yield $\lambda_1(\alpha_k,k) \geq\tfrac14\delta_3(\alpha_1,1) =17/54$ for all $k$. 

Since we always take $\alpha< 1$, the largest eigenvalue of the matrix ${\bf P}_k$ (defined with $\alpha=\alpha_k$) is no more than $2$, uniformly in $k$. 
Hence 
\begin{equation}\label{C-estimate1}
  \CC_k^*\P_k +\P_k\CC_k  \geq   \frac{17}{54} \II \geq \frac{17}{108} \P_k
\end{equation}
uniformly in $k$. Thus in each Fourier mode, we at least have exponential decay (of a quadratic type entropy) at the rate $17/108$ (by proceeding as in \eqref{norm-ineq}).

Since this is also uniform in $n$, we obtain a bound for the continuous velocity model. Let the infinite matrix 
${\bf P}_k$ be the positive matrix using the optimal value of
$\alpha$ in the $k$th mode, and regarded as a bounded operator on $L^2(M_T^{-1})$ through its action on Hermite modes. Define the entropy function by
\begin{equation} \label{entropy:functional}
  e(f) :=  \sum_{k\in \Z} \big\langle (f_k(v) - M_1(v)), {\bf P}_k (f_k(v) - M_1(v))\big\rangle_{L^2(M_1^{-1})}\ .
\end{equation}
We obtain that, for solutions $f(t)$ of our BGK equation \eqref{bgk2} or, equivalently, \eqref{bgk-modes},
$$\frac{{\rm d}}{{\rm d}t}e(f(t)) \leq - \frac{17}{108} e(f(t))\ ,$$
giving exponential relaxation. 

The least eigenvalue of $\P_k$, $1- \alpha/k$, is at least $1- \frac{7 - \sqrt{17}}{4} > \frac14$ uniformly in $k$, and hence we have the inequality
$$e(f) \geq \frac14 \| f - M_1\|^2_{\mathcal H}\ ,$$
with $\mathcal H=L^2(\T^1\times \R;M_T^{-1}(v)\d[v])$.
\

The above method to establish exponential decay is simple to apply but does not give the sharp decay rate (it is off by a factor of about 9, as indicated by numerical results). Hence we shall now sketch how to improve on it. The essence of the above method is to use an ansatz for the transformation matrix $\P_k$, namely to use for its upper left $2\times2$ block the matrix from the 2 velocity case. Using instead larger blocks, will most likely improve the decay rate.

As a second alternative we shall now present an improvement of the crucial matrix inequality \eqref{C-estimate1},
but we shall keep the same ansatz for the matrix $\P_k$: In the inequality 
\begin{equation}\label{C-estimate}
  \CC_k^* \P_k+\P_k\CC_k- 2\mu \P_k \ge {\bf 0}
\end{equation}
we shall choose $\mu\in[0,1]$ as large as possible (related to the matrix inequality \eqref{matrixestimate}). The upper left $3\times 3$ block of this matrix on the l.h.s.\ reads 
$$\DD:=\left(\begin{array}{ccc} 2\alpha-2\mu  & -i\alpha(1-2\mu)/k & \sqrt2 \alpha  \\ 
i\alpha(1-2\mu)/k & 2-2\alpha-2\mu & 0\\
\sqrt2 \alpha & 0& 2-2\mu\end{array}\right)\ .  $$
We shall first derive strict inequalities on $\mu$ to obtain the positive definiteness of this matrix, using Sylvester's criterion.  From $\DD_{0,0}$ we deduce the first condition $0\le\mu<\alpha$. The determinant of the upper left $2\times 2$ block reads
$$
  \delta_2(\mu;\alpha,k)=4(\alpha-\mu)(1-\alpha-\mu)-\frac{\alpha^2}{k^2} (1-2\mu)^2\,.
$$
Since the last term increases with $|k|$, it suffices to consider $\delta_2$ for $k=1$. Next we want to establish the positivity of 
$$
  \frac{\delta_2(\mu;\alpha,1)}{4(1-\alpha^2)} = \mu^2-\mu + \alpha \frac{1-5\alpha/4}{1-\alpha^2}\,.
$$
The zero order term of this quadratic polynomial is positive on the relevant $\alpha$--interval $(0,\frac{7-\sqrt{17}}{4})\subset(0,\frac45)$, taking its maximum value $\frac14$ at $\alpha=\frac12$. For that limiting case, the r.h.s.\ reads $(\mu-\frac12)^2$, and for $0<\alpha<\frac12$,  $\delta_2(\mu;\alpha,1)$ always has a zero in the interval $(0,\frac12)$. This discussion yields the second condition $0\le\mu<\frac12$, related to $\alpha<\frac12$. 

Next we consider the positivity of the determinant of the upper left $3\times 3$ block, which reads
$$
  \delta_3(\mu;\alpha,k)=8(1-\mu)(\alpha-\mu)(1-\alpha-\mu)-4\alpha^2(1-\alpha-\mu)-2\frac{\alpha^2}{k^2} (1-\mu)(1-2\mu)^2\,.
$$
For the same reason as before, we only have to consider the case $k=1$. For the resulting cubic polynomial in $\mu$ we want to find its largest zero in the interval $[0,\frac12]$ w.r.t.\ the parameter $\alpha\in[0,\frac12]$. By numerical inspection we find that $\alpha_0\approx0.4684$ yields 
$\delta_3(\mu;\alpha_0,1)\ge0$ for $\mu\in[0,0.273796...]$. This yields the third condition on $\mu$ and shows that the matrix inequality \eqref{C-estimate} holds with $\mu_0:=0.273796...$, uniformly in $k\ne0$.
This somewhat more involved discussion shows that the decay rate can be improved to $2\mu_0\approx 0.547592$. This finishes the proof of Theorem~\ref{bgk-decay}.\\

\noindent
\underline{Remark:}
To appreciate the above decay rate $\mu_0$ (since $e(f)$ is a quadratic functional), we compare it to a numerical computation of the spectral gap of the ``infinite matrices'' $-ik\LL_1+\LL_2,\,k\in\Z$ from \eqref{L1L2}. To this end we  cut out the upper left $n\times n$ submatrix for large values of $n$. For increasing $n$ the spectral gap approaches $0.6973$. Hence our decay rate is off by only a factor of about $2.5$. If one desired a closer bound, one could work with a $\P$ matrix with a larger block, say $3\times 3$, in the upper left. 


\subsection{Linearized BGK equation}
\label{sec:44}

Next we shall analyze here the linearized BGK equation \eqref{linBGK:torus} for the perturbation $h(x,v,t) =f(x,v,t) -M_T(v)$.
We recall the definition of the normalized Hermite functions $\tilde{g}_m(v)$, $m\in\N_0$ from~\eqref{hermite-fct}
and give explicit expressions for 
$$
  \tilde{g}_0(v) = M_T(v) \quad{\rm and}\quad \tilde{g}_2(v) = \frac{ v^2- T}{\sqrt{2}T}\ M_T(v)\ .
$$
With this notation, \eqref{linBGK:torus} reads
$$
  h_t(x,v,t) + vh_x(x,v,t) = \left(\tilde {g}_0(v) -\frac{1}{\sqrt2} \tilde{g}_2(v)\right)\sigma(x,t)
  + \frac{1}{\sqrt2 T} \tilde{g}_2(v)\tau(x,t)-h(x,v,t)\ .
$$
Fourier transforming in $x$, as in \eqref{Fourier}, each spatial mode $h_k(v,t)$ evolves as
\begin{equation} \label{linBGK:Fourier} 
 \partial_t h_k +  ikv h_k = \tilde {g}_0(v)\sigma_k(t) 
      + \tilde{g}_2(v)\frac{1}{\sqrt2} \left(\frac{\tau_k(t)}{T}-\sigma_k(t)\right)-h_k\,,\quad k\in\Z;\,t\ge0\ .
\end{equation}
Here, $\sigma_k$ and $\tau_k$ denote the spatial modes of the $v$--moments $\sigma$ and $\tau$ defined in \eqref{f:perturb}.

Next we expand $h_k(\cdot,t)\in L^2(\R;M_T^{-1})$ in the orthonormal basis $\{\tilde{g}_m(v)\}_{m\in\N_0}$:
$$
  h_k(v,t)=\sum_{m=0}^\infty \hat h_{k,m}(t)\,\tilde{g}_m(v)\,,\quad\mbox{with }\quad
  \hat h_{k,m} = \langle h_k(v),\tilde{g}_m(v)\rangle_{L^2(M_T^{-1})}\ ,
$$
and the ``infinite vector'' $\hat {\bf h}_k(t) = ( \hat h_{k,0}(t),\,\hat h_{k,1}(t),\,...)^\top\in \ell^2(\N_0)$ contains all Hermite coefficients of $h_k(\cdot,t)$, for each $k\in\Z$. In particular we have
$$
  \hat h_{k,0} = \int_\R h_k(v) \tilde{g}_0(v) M_T^{-1}(v) \d[v] = \sigma_k
$$
and
$$
  \hat h_{k,2} = \int_\R h_k(v) \tilde{g}_2(v) M_T^{-1}(v) \d[v] = \frac{1}{\sqrt2} \left(\frac{\tau_k}{T}-\sigma_k\right)\ .
$$
Hence, \eqref{linBGK:Fourier} can be written equivalently as
$$
  \partial_t h_k(v,t) +  ikv h_k(v,t) = \tilde {g}_0(v) \hat h_{k,0}(t) 
      + \tilde{g}_2(v) \hat h_{k,2}(t)-h_k(v,t)\,,\quad k\in\Z;\,t\ge0\ .
$$
In analogy to \eqref{bgk-hermite}, its Hermite coefficients satisfy 
\begin{equation}\label{linap4}
\partial_t\hat {\bf h}_k(t) + ik\sqrt{T}\,\LL_1 \hat{\bf h}_k(t) = \LL_3 \hat{\bf h}_k(t)\,,\quad k\in\Z;\,t\ge0\ ,
\end{equation}
where the operators $\LL_1,\,\LL_3$ are represented by ``infinite matrices'' on $\ell^2(\N_0)$ by
\begin{equation*}
  \LL_1= \left(\begin{array}{cccc}
  0 &  \sqrt 1 &  0 &  \cdots \\
  \sqrt 1 & 0 &  \sqrt 2 &  0 \\
  0 &  \sqrt 2 & 0 &  \sqrt 3 \\
  \vdots &  0 &  \sqrt 3 & \ddots 
  \end{array}\right)
\,,\quad \LL_3=\diag(0,\,-1,\, 0,\, -1,\, -1,\cdots)\,.
\end{equation*}

We remark that \eqref{linap4} simplifies for the spatial mode $k=0$. One easily verifies that the flow of \eqref{linBGK:torus} preserves \eqref{mloc2}, i.e. $\sigma_0(t)=0$, $\tau_0(t)=0$ $\forall t\ge0$. Hence, \eqref{linBGK:Fourier} yields 
$$
  \partial_t h_0(v,t) = -h_0(v,t)\,,\quad t\ge0\ .
$$
%

For $k\neq 0$, we note that the linearized BGK equation is very similar to the equation specified in (\ref{bgk-hermite}) 
and (\ref{L1L2}): The only difference is that $\LL_2$ is replaced by $\LL_3$, which has one more zero on the diagonal. 
Our treatment of $ik\sqrt{T}\LL_1 - \LL_2$ in the previous section suggests the form of the positive matrix $\P_k$ that will provide our Lyapunov functional in this case. 
We obtained the matrix $\P_k$ in that case by replacing four entries around the location of the 
zero in $\LL_2$ with the entries of $\left(\begin{array}{cc} 1 & -i\alpha/k\\ i\alpha/k & 1\end{array}\right)$, the matrix that provides the optimal $\P_k$ for the two-velocity model.  In the present case, we use two such matrices, one for each zero.  

For parameters $\alpha$ and $\beta$ to be chosen below, we define $\P_k$ to be the matrix that has
\begin{equation}\label{Ptwo}
\left(\begin{array}{cccc} 1  & -i\alpha/k  & 0 & 0 \\ 
i\alpha/k & 1  & 0 &0\\
0 & 0&1 & -i\beta/2k \\
0 & 0 & i\beta/2k & 1\end{array}\right)\  
\end{equation}
as its upper-left $4\times 4$ block, with all other entries being those of the identity. We define $-\CC_k = -ik\LL_1 + \LL_3$,
where, for the rest of this subsection, we use units in which $T=1$.

\begin{lemma} Choosing $\alpha = \beta =1/3$ in $\P_k$ uniformly in $|k|\in\N$, we have
\begin{equation}\label{goodbnd}
\CC_k^*\P_k + \P_k\CC_k  \geq 2\mu \P_k
\end{equation}
where 
\begin{equation}\label{goodbnd2}
\mu = 0.0206\ .
\end{equation}
\end{lemma}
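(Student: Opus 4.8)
The plan is to follow the computation in \S\ref{sec:43}, with $\LL_2$ replaced by $\LL_3$ and the single $2\times 2$ patch in $\P_k$ replaced by the two patches of \eqref{Ptwo}. First I would write $\P_k=\II+\K_k$, where $\K_k$ is the Hermitian matrix carrying only the entries $\mp i\alpha/k$ at $(0,1),(1,0)$ and $\mp i\beta/(2k)$ at $(2,3),(3,2)$; then, since $\CC_k=ik\LL_1-\LL_3$ with $\LL_1,\LL_3$ real symmetric,
$$\CC_k^*\P_k+\P_k\CC_k-2\mu\P_k=-ik[\LL_1,\K_k]-(\LL_3\P_k+\P_k\LL_3)-2\mu\P_k=:\M_k\ .$$
Because $\LL_1$ is tridiagonal and $\K_k$ is supported near the diagonal, $[\LL_1,\K_k]$ has nonzero entries only in rows and columns $0,\dots,4$ (its real entries include couplings at $(0,2)$, $(1,3)$ and $(2,4)$, the last reaching index $4$), while the diagonal contributions of $\LL_3$ and $\P_k$ stay within indices $0,\dots,3$. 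Hence $\M_k$ is block diagonal: an explicit $5\times 5$ Hermitian block $\mathbf{N}_k$ on $\spn\{e_0,\dots,e_4\}$, and $(2-2\mu)\II$ on its orthogonal complement. As $\mu=0.0206<1$, the second block is positive, so \eqref{goodbnd} is equivalent to $\mathbf{N}_k\ge 0$ for all $|k|\in\N$. (Note also that $\P_k>0$ for all such $k$, since $\alpha=\beta=\tfrac13<1\le|k|$, so that \eqref{goodbnd} really does deliver exponential decay of $\|\hat{\bf h}_k\|_{\P_k}$ at rate $\mu$.)

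Next I would reduce this infinite family of matrix inequalities to two scalar checks. The only $k$-dependence of $\mathbf{N}_k$ is through the four entries of $\K_k$, each proportional to $1/k$; the diagonal and all the real off-diagonal entries are $k$-independent. Therefore, for $k\ge 1$, one has the exact identity $\mathbf{N}_k=\tfrac1k\,\mathbf{N}_1+\big(1-\tfrac1k\big)\,\mathbf{N}_\infty$, where $\mathbf{N}_\infty$ is obtained from $\mathbf{N}_1$ by deleting the four $1/k$-entries; and for $k<0$, $\mathbf{N}_k=\overline{\mathbf{N}_{|k|}}$ has the same spectrum. Since a convex combination of positive semidefinite matrices is positive semidefinite, it suffices to verify $\mathbf{N}_1\ge 0$ and $\mathbf{N}_\infty\ge 0$.

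With $\alpha=\beta=\tfrac13$ and $\mu=0.0206$ these are two fixed $5\times 5$ Hermitian matrices, and I would apply Sylvester's criterion to each, exactly as was done for the $3\times 3$ block in \S\ref{sec:43}: $\mathbf{N}_\infty$ decouples into a $\{1,3\}$-block and a $\{0,2,4\}$-block, both manifestly positive definite, while for $\mathbf{N}_1$ one checks that all five leading principal minors are positive (the third, $\approx 0.09$, being by far the smallest, so that the margin is thin and the constant in \eqref{goodbnd2} is essentially determined by it). The value $0.0206$ is a safe lower bound for the largest $\mu$ for which all these minors remain nonnegative. I expect the main obstacle to be purely the commutator bookkeeping for $[\LL_1,\K_k]$ — in particular correctly pinning down the real coupling to the index-$4$ mode, which is precisely what makes the finite block $5\times 5$ rather than $4\times 4$; once that is in hand, the remainder is a routine finite computation.
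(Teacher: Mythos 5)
Your proposal is correct, and it reaches \eqref{goodbnd}--\eqref{goodbnd2} by the same basic ansatz as the paper (the matrices $\P_k$ from \eqref{Ptwo} with $\alpha=\beta=\tfrac13$, reduction to an explicit Hermitian $5\times5$ block, Sylvester's criterion), but the two key estimates are done differently. The paper first proves positive definiteness of the $5\times5$ block $\DD_{k,\alpha,\alpha}$ itself: it factors $\delta_5(k,\alpha,\alpha)=\alpha^2 p_5(\alpha,k)$, proves the monotonicity $p_5(\alpha,k)\ge p_5(\alpha,1)$ in $k$, evaluates at $\alpha=\tfrac13$, and then extracts the rate by the crude arithmetic--geometric mean bound $\lambda_1\ge 256\,\delta_5/(\tr\DD)^4\ge 0.0549$ together with $\tfrac23\II\le\P_k\le\tfrac43\II$, which is exactly where $\mu=0.0206$ comes from. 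You instead fold $-2\mu\P_k$ into the block from the start (in the spirit of the refinement at the end of \S\ref{sec:43}) and observe that the resulting block $\mathbf{N}_k$ is an \emph{affine} function of $1/k$, so that for $|k|\ge1$ it is a convex combination of $\mathbf{N}_1$ and $\mathbf{N}_\infty$ (and $\mathbf{N}_{-k}=\overline{\mathbf{N}_{k}}$ has the same spectrum); this replaces the paper's $k$-monotonicity argument for $\delta_5$ (and the unstated ``similar but simpler'' analysis of $\delta_1,\dots,\delta_4$) by two fixed numerical positivity checks, which is a genuine and rather clean simplification. The price is that your argument verifies the given value $\mu=0.0206$ a posteriori rather than producing it, and it ultimately rests on the finite computation you only sketch; that computation does check out: your commutator bookkeeping reproduces the paper's block $\DD_{k,\alpha,\beta}$ (including the real couplings $\sqrt2\alpha$ at $(0,2)$, $-\beta/\sqrt2$ at $(1,3)$ and $\beta$ at $(2,4)$), $\mathbf{N}_\infty$ does split into a positive $\{1,3\}$-block and a positive $\{0,2,4\}$-block (the latter is not diagonally dominant, so ``manifestly'' still means checking its $2\times2$ and $3\times3$ minors, roughly $0.11$ and $0.15$), and with $2\mu=0.0412$ the leading principal minors of $\mathbf{N}_1$ are approximately $0.63,\ 0.71,\ 0.091,\ 0.113,\ 0.118$, so your ``third minor $\approx 0.09$'' is right and all are safely positive. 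In short: same framework, but a different treatment of the $k$-uniformity and of the extraction of $\mu$; the paper's route explains the constant and avoids any matrix-by-matrix numerics beyond $\delta_5$, while yours is shorter and reduces the whole family of inequalities to two explicit $5\times5$ checks.
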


\begin{proof}
We  compute that $\CC_k^*\P_k + \P_k\CC_k$ is twice the identity matrix whose upper left $5\times 5$ block is replaced by 
$$\DD_{k,\alpha,\beta} = 
\left(\begin{array}{ccccc}
2\alpha & -i\alpha/k & \sqrt{2}\alpha & 0 & 0 \\
i\alpha/k & 2-2\alpha  & 0 & -\beta/\sqrt{2} & 0\\
\sqrt{2}\alpha & 0 & \sqrt{3}\beta & -i\beta/2k & \beta\\
0 & -\beta/\sqrt{2} & i\beta/2k & 2-\sqrt{3}\beta & 0\\
0 & 0 & \beta & 0 & 2
  \end{array}\right) \,.
$$
We seek to choose $\alpha$ and $\beta$ to make this matrix 
positive definite.

For $1\leq j \leq 5$, let $\delta_j(k,\alpha,\beta)$ denote the determinant of the upper left $j\times j$ submatrix of $\DD_{k,\alpha,\beta}$. For $\alpha=\beta$, the first and third column of $\DD_{k,\alpha,\beta}$ have the common factor $\alpha$.
We then compute that
$$\delta_5(k,\alpha,\alpha) = \alpha^2p_5(\alpha,k)\,,$$
where $p_5(\alpha,k)$ is a cubic polynomial in $\alpha$ with coefficients depending on $k$:
\begin{align*}
p_5(\alpha,k) &= 16(\sqrt{3}-1) - \left[ 8\sqrt{3} + 16 +\frac{2+4\sqrt{3}}{k^2}\right] \alpha \\
&\quad + \left[34 -6\sqrt{3} + \frac{24k^2 + 1}{2 k^4}\right]\alpha^2 - \left[4\sqrt{3}-1 + \frac{\sqrt{3}}{k^2}\right]\alpha^3 \,.
\end{align*}
Next, we establish the bound
\begin{multline} \label{p_5}
 p_5(\alpha,k) \geq p_5(\alpha,1) \\ 
  = 16(\sqrt{3}-1) - (12\sqrt{3}+18)\alpha + (46.5- 6\sqrt{3})\alpha^2  - (5\sqrt{3} -1)\alpha^3 >0
\end{multline}
for $\alpha\in[0,\alpha_1]$ with $\alpha_1\approx 0.555$ and $|k|\in \N$.
To see the first inequality we consider
\[ p_5(\alpha,k) -p_5(\alpha,1) = \alpha (1-\tfrac1{k^2}) \varphi(\alpha,k) \]
with 
\[ \varphi(\alpha,k) := \sqrt{3} \alpha^2 - \big( \tfrac12 (1+\tfrac1{k^2}) +12 \big) \alpha +2 +4\sqrt{3}. \]
It satisfies $\varphi(\alpha,1)>0$ for $\alpha\in[0,\alpha_2]$ with $\alpha_2\approx 0.765$
and $\partial_k \varphi = \alpha/k^3$ for $\alpha\geq 0$ and $k\in\N$.
The r.h.s. of~\eqref{p_5} is easily seen to be monotone decreasing and evaluating it at $\alpha = 1/3$ and simplifying, we obtain
$p_5(\alpha,k) \geq 2.5$ for $\alpha \in [0,1/3]$. Finally, we then have
$$\delta_5(k,\alpha,\alpha) \geq 2.5\alpha^2$$
for $\alpha \in [0,1/3]$ and all $k\neq 0$. 
A similar but simpler analysis shows that for $j=1,2,3,4$, $\delta_j(k,\alpha,\alpha)>0$ for  $\alpha \in [0,1/3]$ and all $k\neq 0$. 

Thus, we choose $\alpha = \beta =1/3$ uniformly in $k$ and this makes $\DD_{k,\alpha,\beta}$ positive definite. Let
$\{\lambda_1,\lambda_2,\lambda_3,\lambda_4,\lambda_5\}$ be the eigenvalues of  $\DD_{k,1/3,1/3}$ arranged in 
increasing order.   We seek a lower bound on $\lambda_1$. 
Note that by the arithmetic-geometric mean inequality,

\begin{eqnarray*}
 \lambda_1 = \frac{\delta_5(k,1/3,1/3)}{\lambda_2 \lambda_3 \lambda_4 \lambda_5} &\geq& \delta_5(k,1/3,1/3) \left( \frac{ \lambda_2 + \lambda_3 + \lambda_4 + \lambda_5}{4}\right)^{-4}\\
 &\geq& 256\ \frac{\delta_5(k,1/3,1/3)}{(\tr[\DD_{k,1/3,1/3}])^4}\ .
\end{eqnarray*}
 Since $\tr[\DD_{k,\alpha,\beta}] =6$ independent of $k$, $\alpha$ and $\beta$, we finally obtain the bound $\lambda_1 \geq 0.0549$,
 and this means that, uniformly in $k\neq 0$, 
 \begin{equation}\label{Pbnd}
 \CC_k^*\P_k + \P_k\CC_k \geq 0.0549\ {\bf I}\ . 
 \end{equation}
 A simple computation shows that the eigenvalues of $\P_k$ are $1$, $1\pm 1/6k$, and $1\pm 1/3k$. Hence uniformly in $k$,
 \begin{equation}\label{Pbound}
 \frac23 {\bf I}  \leq \P_k \leq \frac43 {\bf I}\ .
 \end{equation}
Combining \eqref{Pbound} with \eqref{Pbnd} yields the result.
\smartqed \qed \end{proof}
To deduce the first statement of Theorem~\ref{linBGK-decay}
we consider a solution $h$ of \eqref{linBGK:torus}, and for $\gamma\geq 0$ the entropy functional $e_\gamma(f)$ defined by
\begin{equation}\label{gamma-entropy}
  e_\gamma(f) := \sum_{k\in \Z} (1+k^2)^{\gamma} \langle h_k(v), \P_k h_k(v)\rangle_{L^2(M_T^{-1})}\ ,
\end{equation}
with $f=M_1+h$.
Here the matrices $\P_0 = \II$ and $\P_k$ defined in \eqref{Ptwo} for $k\neq 0$
are regarded as bounded operators on $L^2(M_T^{-1})$.
Then 
\begin{equation}\label{e-inequality}
  \ddt e_\gamma(f) = -\sum_{k\in \Z} (1+k^2)^{\gamma} \langle h_k(v), (\CC_k^*\P_k + \P_k\CC_k) h_k(v)\rangle_{L^2(M_T^{-1})} 
   \leq - 0.0412 \,e_\gamma(f) \ , 
\end{equation}
which implies~\eqref{locasstab5} and this finishes the proof of Theorem \ref{linBGK-decay}(a).

We note that the constant in \eqref{e-inequality}
is within a factor of 18 of what numerical calculation shows is best possible.  With more work, in particular not making the
simplifying assumption $\alpha = \beta$ in the definition of $\P$, and also employing some of the ideas in the final part of section \ref{sec:43}, one can still better  within this framework.

\subsection{Local asymptotic stability for the BGK equation}
\label{sec:45}

For $\gamma \ge 0$, let $H^\gamma(\mathbf{T}^1)$ be the Sobolev space consisting of  the completion  of smooth functions $\varphi$ on $\mathbf{T}^1$ in the Hilbertian norm 
$$\|\varphi\|_{H^\gamma}^2 :=   \sum_{k\in \Z} (1+k^2)^{\gamma}|\varphi_k|^2\ ,$$
where $\varphi_k$ is the $k$th Fourier coefficient of $\varphi$. Let $\mathcal{H}_\gamma$ denote the Hilbert space
$H^\gamma(\mathbf{T}^1)\otimes L^2(\R;M_T^{-1}(v)\d[v])$, where the inner product in $\mathcal{H}_\gamma$ is given by
$$\langle f,g\rangle_{\mathcal{H}_\gamma} = \int_{\mathbf{T}^1} \int_\R \overline{f}(x,v) \left[\left(1 - \partial_x^2\right)^{\gamma} g(x,v) \right]M_T^{-1}(v){\rm d}v {\rm d}\tilde x \ , $$
where $\d[\tilde x]$ denotes the normalized Lebesgue measure on $\T^1$.

Then $\mathcal{H}_0$ is simply the weighted space $L^2(\mathbf{T}^1\times\R;M_T^{-1}(v)\d[v])$
and, for all $\gamma\geq 0$,  ${\bf Q}$ is self-adjoint on $\mathcal{H}_\gamma$. 

Let $\rho$, $P$, $\sigma$ and $\tau$ be defined in terms of a density $f$ as in (\ref{f:perturb}).  For all $\gamma$,
$\|\sigma\|_{H^\gamma}^2  = \langle \sigma M_T, f - f^\infty\rangle_{\mathcal{H}_\gamma}$.
Then by the Cauchy-Schwarz inequality,
\begin{equation}\label{locbound1}
\|\sigma\|_{H^\gamma}^2 \leq  \|\sigma M_T\|_{\mathcal{H}_\gamma} \| f-f^\infty\|_{\mathcal{H}_\gamma}  =
\|\sigma\|_{H^\gamma} \| f-f^\infty\|_{\mathcal{H}_\gamma} \ .
\end{equation} 
Likewise, $\|\tau\|_{H^\gamma}^2  = \langle \tau v^2 M_T, f - f^\infty\rangle_{\mathcal{H}_\gamma}$, and by 
the Cauchy-Schwarz inequality,
\begin{equation}\label{locbound2}
\|\tau\|_{H^\gamma}^2 \leq  \|\tau v^2 M_T\|_{\mathcal{H}_\gamma} \| f-f^\infty\|_{\mathcal{H}_\gamma}  =
\sqrt{3}T\|\tau\|_{H^\gamma} \| f-f^\infty\|_{\mathcal{H}_\gamma} \ .
\end{equation} 
For  $\gamma> 1/2$, functions in $H^\gamma$ are H\"older continuous, and the $H^\gamma$ norm controls
their supremum norm.   Combining this with the estimates proved above, we see that for all  $\gamma> 1/2$, there is a finite constant
$C_\gamma$ such that the pressure and density satisfy
\begin{equation}\label{locbound3}
\|\sigma\|_\infty=\|\rho - 1\|_\infty \leq C_\gamma  \| f-f^\infty\|_{\mathcal{H}_\gamma} \qquad{\rm and}\qquad 
\|\tau\|_\infty=\|P - T\|_\infty \leq C_\gamma  \| f-f^\infty\|_{\mathcal{H}_\gamma} \ .
\end{equation}
Using these estimates it is a simple matter to control the approximation in (\ref{taylor}). For $s\in [0,1]$ and $(x,v) \in \mathbf{T}^1\times\R$, define
$$F(s,x,v) := \frac{(1+s\sigma(x))^{3/2}}{\sqrt{2\pi (T+s\tau(x))}}e^{-v^2(1+s\sigma(x))/2(T+s\tau(x))}\ ,$$
so that the gain term in the linearized BGK equation \eqref{linBGK:torus} is
${\displaystyle \partial_s F(0,x,v)}$.
In this notation,
\begin{eqnarray*}
&&R_f(x,v) :=\\
&&\qquad M_f(x,v) - M_T(v) - \left[ \left( \frac32 - \frac{v^2}{2T}\right)M_T(v)\sigma(x)
      + \left( -\frac{1}{2T}+ \frac{v^2}{2T^2}\right)M_T(v)\tau(x)\right] \\
      &&\qquad= \int_0^1 \left[ \partial_s F(s,x,v) - \partial_s F(0,x,v)\right]{\rm d}s
      = \int_0^1 \int_0^s \left[ \partial_s^2 F(r,x,v)\right] {\rm d}r{\rm d}s\ .
   \end{eqnarray*}
We compute
\begin{eqnarray} \label{F2}
 &&\partial_s^2 F(s,x,v)\\
   &&  = \frac{\tau-T\sigma}{(1+s\sigma)^2} \left[ - \frac{3\sigma}{4\theta_s} 
       + \bigg( \frac32 v^2\sigma + \frac34 \tau\bigg)\frac1{\theta_s^2}
       - \bigg( \frac14 v^4\sigma + \frac32 v^2\tau\bigg)\frac1{\theta_s^3}
       + \frac{v^4\tau}{4\theta_s^4}
       \right] M_{\theta_s}(v) \nonumber
\end{eqnarray}
with the notations $\theta_s:=\frac{T+s\tau}{1+s\sigma}$ and $M_{\theta_s}(v) := \frac1{\sqrt{2\pi \theta_s}} e^{-v^2/(2\theta_s)}$. Note that the r.h.s.~of \eqref{F2} is of the order $\mathcal O(\sigma^2+\tau^2)$, which will be related to $\mathcal O((f-f^\infty)^2)$ due to the estimates \eqref{locbound1}-\eqref{locbound2}.

Simple but cumbersome calculations now show that  
if $\gamma> 1/2$ and $\norm{f-f^\infty}_{\mathcal{H}_\gamma}$ is sufficiently small, 
then there exists a finite constant $\tilde C_{\gamma,T}$ depending only on $\gamma$ and $T$
such that for all $s\in [0,1]$,
\begin{equation}\label{F}
\left\Vert \partial_s^2 F(s,x,v) \right \Vert_{\mathcal{H}_\gamma}   \leq \tilde C_{\gamma,T}
\| f-f^\infty\|_{\mathcal{H}_\gamma}^2\ ,
\end{equation}
and hence
\begin{equation}\label{R-estimate}
  \|R_f\|_{\mathcal{H}_\gamma}\le \tilde C_{\gamma,T}
\| f-f^\infty\|_{\mathcal{H}_\gamma}^2\  .
\end{equation}
[The calculations are simplest for non-negative integer $\gamma$, in which case the Sobolev norms can be calculated by differentiation. 
For $\gamma>1/2$ and sufficiently small $\norm{f-f^\infty}_{\mathcal{H}_\gamma}$,
the estimates (\ref{locbound3}) ensure for all $s\in [0,1]$
the boundedness of $0<\epsilon<\norm{1+s \sigma}_\infty\ ,\, \norm{T+s \tau}_\infty<\infty$ for some fixed $\epsilon>0$
and the $L^2(\R;M_T^{-1}(v)\d[v])$-integrability of 
\[ e^{-v^2(1+s\sigma(x))/2(T+s\tau(x))} \leq  e^{-v^2/3T} \quad \text{for all } x\ . \]
In \eqref{F}, higher powers of $\norm{f-f^\infty}_{\mathcal{H}_\gamma}$ (arising due to derivatives of $\sigma$ and $\tau$) can be absorbed into the constant of the quadratic term.] 
 
Now let $f$ be a solution of the BGK equation~\eqref{bgk:torus} with constant temperature $T=1$ 
 and define $h(x,v,t) := f(x,v,t) - M_T(v)$ as in the introduction. Now define the linearized BGK operator
\begin{equation}
\Q_2 h(x,v,t)  :=  \left( \frac32 - \frac{v^2}{2T}\right)M_T(v)\sigma(x)
      + \left( -\frac{1}{2T}+ \frac{v^2}{2T^2}\right)M_T(v)\tau(x)  - h(x,v,t)
\end{equation}
where of course $\sigma$ and $\tau$ are determined by $f$, and hence $h$. 
Then the nonlinear BGK equation (\ref {bgk:torus}) becomes
\begin{equation} \label{bgk:torus3} 
 h_t(x,v,t) +  v\ h_x(x,v,t) =  \Q_2 h(x,v,t)  +  R_f(x,v,t)\ , \qquad t\geq 0\ , 
\end{equation}
which deviates from the linearized BGK equation \eqref{linBGK:torus} only by the additional term $R_f$.

It is now a simple matter to prove local asymptotic stability. We shall use here exactly the entropy functional $e_\gamma(f)$ defined in \eqref{gamma-entropy} with $f=M_1+h$.
Now assume that $h$ solves (\ref{bgk:torus3}). To compute $\frac{{\rm d}}{{\rm d}t} e_\gamma(f)$ we use the inequality \eqref{e-inequality} for the drift term and for $\Q_2 h$ in \eqref{bgk:torus3}, as well as $\|\P_k\|\le\frac43$ and \eqref{R-estimate} for the term $R_f$.
This yields
\begin{equation} \label{e-decay1}
\frac{{\rm d}}{{\rm d}t} e_\gamma(f) \leq  -
0.0412 \,e_\gamma(f)  + \frac83 \tilde C_{\gamma,T}\| h\|_{\mathcal{H}_\gamma}^3\ ,
\end{equation}
(if $\|h\|_{\mathcal{H}_\gamma}$ is small enough)
where we have used the fact that $h = f-f^\infty$.
Then since 
$$\frac23 e_\gamma(f)  \leq \| h\|_{\mathcal{H}_\gamma}^2 \leq \frac43 e_\gamma(f)\ ,$$
which is simply a restatement of (\ref{Pbound}), 
it is now simple to complete the proof of Theorem~\ref{linBGK-decay}(b): 
Estimate \eqref{e-decay1} shows that there is a $\delta_\gamma>0$ so that if the initial data $f^I(x,v)$ satisfies $\|f^I- f^\infty\|_{\mathcal{H}_\gamma} < \delta_\gamma$,
then the solution $f(t)$ satisfies
$$e_\gamma(f(t))  \leq e^{-t/25}e_\gamma(f^I)\ .$$
Here we used that the linear decay rate in \eqref{e-decay1} is slightly better than $\frac{1}{25}$, to compensate the nonlinear term.

\medskip
We expect that the strategy from this section can be adapted also to nonlinear kinetic Fokker-Planck equations; this will be the topic of a subsequent work.


\begin{acknowledgement}
The second author (AA) was supported by the FWF-doctoral school ``Dissipation and dispersion in non-linear partial differential equations''.
The third author (EC) was partially supported by U.S. N.S.F. grant DMS 1501007.
\end{acknowledgement}
%



\begin{thebibliography}{99.}%

\bibitem{AAS15}
Achleitner, F., Arnold, A., St\"urzer, D.:
Large-time behavior in non-symmetric Fokker-Planck equations.
to appear in Rivista di Matematica della Universit\`a di Parma (2015)

%

\bibitem{ArCaJuL08}
{Arnold, A., Carlen, E., and Ju, Q.:}
\newblock Large-time behavior of non-symmetric {F}okker-{P}lanck type
  equations.
\newblock {Commun. Stoch. Anal. \textbf{2}}, no. 1, 153--175 (2008)

\bibitem{ArEr14}
{Arnold, A., and Erb, J.}
\newblock Sharp entropy decay for hypocoercive and non-symmetric
  {F}okker-{P}lanck equations with linear drift.
\newblock {arXiv preprint arXiv:1409.5425\/} (2014)

\bibitem{ArMaToUn01}
{Arnold, A., Markowich, P., Toscani, G., and Unterreiter, A.}
\newblock On convex {S}obolev inequalities and the rate of convergence to
  equilibrium for {F}okker-{P}lanck type equations.
\newblock {Comm. Partial Differential Equations \textbf{26}}, no. 1-2, 43--100 (2001)

\bibitem{BaEmH84}
{Bakry, D., and {\'E}mery, M.}
\newblock Hypercontractivit\'e de semi-groupes de diffusion.
\newblock {C. R. Acad. Sci. Paris S\'er. I Math. \textbf{299}}, no. 15, 775--778 (1984)

\bibitem{BaEmD85}
{Bakry, D., and {\'E}mery, M.}
\newblock Diffusions hypercontractives.
\newblock In {\em S\'eminaire de probabilit\'es, {XIX}, 1983/84}, vol.~1123 of
  {\em Lecture Notes in Math.} Springer, Berlin 177--206 (1985)

\bibitem{BL} 
Bergmann, P., and Lebowitz, J.L.
\newblock A new approach to nonequilibrium processes. 
\newblock {Phys. Rev. \textbf{99}}, no. 2, 578--587 (1955)

\bibitem{BGK54}
{Bhatnagar, P.L., Gross, E.P., and Krook, M.}
\newblock A Model for Collision Processes in Gases. I. Small Amplitude Processes in Charged and Neutral One-Component Systems.
\newblock {Physical Review \textbf{94}}, no. 3, 511--525  (1954)

\bibitem{BoTe06}
{Bobkov, S.G., and Tetali, P.}
\newblock Modified logarithmic {S}obolev inequalities in discrete settings.
\newblock {J. Theoret. Probab. \textbf{19}}, no. 2, 289--336 (2006)

\bibitem{BoCa09}
{Bosi, R., and C{\'a}ceres, M. J.},
\newblock {The {BGK} model with external confining potential: existence,
              long-time behaviour and time-periodic {M}axwellian equilibria}.
\newblock {J. Stat. Phys.}
\newblock \textbf{136}, no. 2, 297--330 (2009)

\bibitem{Col11}
{Coleman, R.}
\newblock On Krawtchouk polynomials.
\newblock {arXiv preprint arXiv:1101.1798\/} (2011)

\bibitem{DiSaCo96}
{Diaconis, P., and Saloff-Coste, L.}
\newblock Logarithmic {S}obolev inequalities for finite {M}arkov chains.
\newblock {Ann. Appl. Probab. \textbf{6}}, no. 3, 695--750, (1996)

\bibitem{DoMoScH10}
{Dolbeault, J., Mouhot, C., and Schmeiser, C.}
\newblock Hypocoercivity for linear kinetic equations conserving mass.
\newblock {Trans. Amer. Math. Soc.}
\newblock \textbf{367}, no. 6, 3807--3828 (2015)

\bibitem{DoMoScH09}
{Dolbeault, J., Mouhot, C., and Schmeiser, C.}
\newblock Hypocoercivity for kinetic equations with linear relaxation terms.
\newblock {C. R. Math. Acad. Sci. Paris}
\newblock \textbf{347}, no. 9-10, 511--516 (2009)

\bibitem{He06}
{H{\'e}rau, F.},
\newblock Hypocoercivity and exponential time decay for the linear inhomogeneous relaxation {B}oltzmann equation.
\newblock {Asymptot. Anal.}
\newblock \textbf{46}, no. 3-4, 349--359 (2006)

\bibitem{HoJo13}
{Horn, R.A., and Johnson, C.R.}
\newblock Matrix analysis.
\newblock {\em Cambridge University Press, Cambridge}, 2nd ed. (2013)

\bibitem{HoJo91}
{Horn, R.A., and Johnson, C.R.}
\newblock Topics in matrix analysis.
\newblock {\em Cambridge University Press, Cambridge}, (1991)

\bibitem{K56} {Kac, M.} 
\newblock{Foundations of kinetic theory}, 
\newblock{\em Proc. 3rd Berkeley
Symp. Math. Stat. Prob., J. Neyman, ed. Univ. of California, vol 3,}
\newblock{171--197 (1956)}


\bibitem{MiHouLiu15}
{Michel, A.N., Hou, L., and Liu, D.}
\newblock Stability of dynamical systems.
\newblock {\em Birkh\"auser/Springer}, 2nd ed. (2015)

\bibitem{No97}
{Norris, J.R.}
\newblock {Markov chains}.
\newblock {\em Cambridge University Press, Cambridge}, (1997)

\bibitem{Pe89}
{Perthame, B.}
\newblock {Global existence to the {BGK} model of {B}oltzmann equation.}
\newblock {J. Differential Equations}
\newblock \textbf{82}, no. 1, 191--205 (1989)

\bibitem{Pe07}
{Perthame, B.}
\newblock Transport Equations in Biology.
\newblock {\em Birkh\"auser, Basel} (2007)

\bibitem{PePu93}
{Perthame, B. and Pulvirenti, M.}
\newblock {Weighted {$L^\infty$} bounds and uniqueness for the {B}oltzmann {BGK} model.}
\newblock {Arch. Rational Mech. Anal.}
\newblock \textbf{125}, no. 3, 289--295 (1993)


\bibitem{Se81}
{Seneta, E.}
\newblock {Nonnegative matrices and {M}arkov chains.}
\newblock {\em Springer Series in Statistics}, 2nd ed.,
\newblock {Springer-Verlag, New York}, (1981)

\bibitem{vK07}
{van Kampen, N.G.}
\newblock Stochastic processes in physics and chemistry.
\newblock {\em Elsevier}, 3rd ed. (2007)

\bibitem{ViH06}
{Villani, C.}
\newblock Hypocoercivity.
\newblock {\em Mem. Amer. Math. Soc. \textbf{202}}, no. 950 (2009)

\end{thebibliography}
\end{document}